\newtheorem{theorem}{Theorem}
\newtheorem{lemma}{Lemma}
\newtheorem{corollary}{Corollary}
\numberwithin{equation}{subsection}
 \DeclareMathOperator{\mes}{mes}
\begin{document}
\author{Uhangi Goginava}
\title[Convergence  of logarithmic means]{Convergence  of logarithmic means of quadratical partial sums of double Fourier
series}
\address{U. Goginava, Department of Mathematics, Faculty of Exact and
Natural Sciences, Iv. Javakhishvili Tbilisi State University, Chavchavadze
str. 1, Tbilisi 0128, Georgia}
\email{zazagoginava@gmail.com}
\maketitle

\begin{abstract}
In this paper we investigate some convergence and divergence properties of the
logarithmic means of quadratical partial sums of double Fourier series of
functions in the measure and in the $L$ Lebesgue norm.
\end{abstract}

\footnotetext{%
2010 Mathematics Subject Classification 42A24 .
\par
Key words and phrases: double Fourier series, Orlicz space, Convergence in
measure}

\section{Introduction}

In the literature, it is known the notion of the Riesz's logarithmic means
of a Fourier series. The $n$-th mean of the Fourier series of the integrable
function $f$ is defined by
\begin{equation*}
\frac{1}{l_{n}}\sum_{k=1}^{n-1}\frac{S_{k}(f)}{k},
\end{equation*}%
where $S_{k}(f)$ is the partial sum of its Fourier series. This Riesz's
logarithmic means with respect to the trigonometric system has been studied
by a lot of authors. We mention for instance the papers of Szász, and Yabuta
\cite{sza, ya}. This mean with respect to the Walsh, Vilenkin system is
discussed by Simon, and Gát \cite{Sim, gat}.

Let $\left\{ q_{k}:k\geq 0\right\} $ be a sequence of nonnegative numbers.
The Nörlund means for the Fourier series of $f$ are defined by
\begin{equation*}
\frac{1}{\sum_{k=1}^{n}q_{k}}\sum_{k=0}^{n-1}q_{n-k}S_{k}(f).
\end{equation*}
If $q_{k}=\frac{1}{k}$, then we get the (Nörlund) logarithmic means:
\begin{equation*}
\frac{1}{l_{n}}\sum_{k=0}^{n-1}\frac{S_{k}(f)}{n-k},l_{n}:=\sum_{k=1}^{n}%
\frac{1}{k}.
\end{equation*}%
In this paper we call it logarithmic means. Although, it is a kind of
\textquotedblleft reverse\textquotedblright\ Riesz's logarithmic means. In
\cite{gg} we proved some convergence and divergence properties of the
logarithmic means of Walsh-Fourier series of functions in the class of
continuous functions, and in the Lebesgue space $L$. \ In this paper we
discuss some convergence and divergence properties of logarithmic means of
quadratical partial sums of the double Fourier series of functions in the $L$
Lebesgue norm (see Theorems \ref{normconvergence} and \ref{normdivergence} ).

The partial sums $S_{n}(f)$ of the Fourier series of a function $f\in
L(T),\quad T=[-\pi ,\pi )$ converges in measure on $T$. The condition $f\in
L\log L(T^{2})$ provides convergence in measure on $T^{2}$ of the
rectangular partial sums $S_{n,m}(f)$ of double Fourier series \cite{Zh}.
The first example of a function from classes wider than $L\log L(T^{2})$
with $S_{n,n}(f)$ divergent in measure on $T^{2}$ was obtained in Getsadze
\cite{Ge} and Konyagin \cite{Kon}.

In the present paper we investigate convergence in measure of logarithmic
means of quadratical partial sums
\begin{equation*}
\frac{1}{l_{n}}\sum_{i=0}^{n-1}\frac{S_{i,i}(f,x,y)}{n-i}
\end{equation*}%
of double Fourier series and prove Theorem \ref{measuredivergence}, that is,
for any Orlicz space, which is not a subspace of $L\log L(I^{2})$, the set
of the functions that these means converges in measure is of first Baire
category. From this result follows that (Corollary \ref{measuredivergence2})
in classes wider than $L\log L(T^{2})$ there exists functions $f$ for which
logarithmic means $t_{n}\left( f\right) $ of quadratical partial sums of
double Fourier series diverges in measure. Besides, it is surprising that
the two cases (the logarithmic means of quadratical and the two-dimensional
partial sums) are not different in this point of view. Namely, for instance
in the case of $(C,1)$ means we have a quite different situation. That is,
it is well-known \cite{Zh} that the Marcinkiewicz means $\sigma _{n}\left(
f\right) =\frac{1}{n}\sum\limits_{j=1}^{n}S_{j,j}\left( f\right) $, that is
the $(C,1)$ means of quadratical partial sums of double trigonometric
Fourier series of a function $f\in L$ converges in $L$-norm and a.e. to $f$.
Thus, in question of convergence in measure and in norm logarithmic means of
quadratical partial sums of double Fourier series differs from Marcinkiewicz
means and like the usual quadratical partial sums of double Fourier series.

The results for summability of quadratical partial sums of Walsh-Fourier
series can be found in \cite{GogJAT, GGNSMH}

\section{Definitions and Notation}

We denote by $L_{0}=L_{0}(T^{2})$ the Lebesque space of functions that are
measurable and finite almost everywhere on $T^{2}=[-\pi ,\pi )\times \lbrack
-\pi ,\pi )$. $\mes(A)$ is the Lebesque measure of the set $A\subset T^{2}$.

Let $L_{Q}=L_{Q}(T^{2})$ be the Orlicz space \cite{K-R} generated by Young
function $Q$, i.e. $Q$ is convex continuous even function such that $Q(0)=0$
and

\begin{equation*}
\lim\limits_{u\rightarrow +\infty }\frac{Q\left( u\right) }{u}=+\infty
,\,\,\,\,\lim\limits_{u\rightarrow 0}\frac{Q\left( u\right) }{u}=0.
\end{equation*}

This space is endowed with the norm
\begin{equation*}
\Vert f\Vert _{L_{Q}(T^{2})}=\inf \{k>0:\iint\limits_{T^{2}}Q(\left\vert
f(x,y)\right\vert /k)dxdy\leq 1\}.
\end{equation*}

In particular, if $Q(u)=u\log (1+u)$ and $Q(u)=u\log ^{2}(1+u)\quad u>0$,
then the corresponding space will be denoted by $L\log L(T^{2})$ and $L\log
^{2}L(T^{2})$, respectively.

Let $f\in L_{1}\left( T^{2}\right) .$ The Fourier series of $f$ with respect
to the trigonometric system is the series
\begin{equation*}
S\left[ f\right] :=\sum_{m,n=-\infty }^{+\infty }\widehat{f}\left(
m,n\right) e^{imx}e^{iny},
\end{equation*}%
where
\begin{equation*}
\widehat{f}\left( m,n\right) =\frac{1}{4\pi ^{2}}\iint%
\limits_{T^{2}}f(x,y)e^{-imx}e^{-iny}dxdy
\end{equation*}%
are the Fourier coefficients of the function $f$. The rectangular partial
sums are defined as follows:
\begin{equation*}
S_{M,N}(f;x,y):=\sum_{m=-M}^{M}\sum_{n=-N}^{N}\widehat{f}\left( m,n\right)
e^{imx}e^{iny}.
\end{equation*}

The logarithmic means of quadratical partial sums of double Fourier series
is defined as follows
\begin{equation*}
t_{n}\left( f,x,y\right) =\frac{1}{l_{n}}\sum\limits_{i=0}^{n-1}\frac{%
S_{i,i}\left( f,x,y\right) }{n-i}.
\end{equation*}

It is evident that
\begin{equation*}
t_{n}\left( f,x,y\right) =\iint\limits_{T^{2}}f\left( s,t\right) F_{n}\left(
x-s,y-t\right) dsdt,
\end{equation*}%
where
\begin{equation*}
F_{n}\left( t,s\right) =\frac{1}{l_{n}}\sum\limits_{k=0}^{n-1}\frac{%
D_{k}\left( t\right) D_{k}\left( s\right) }{n-k}.
\end{equation*}

\section{Main Results}

It is well-known that the following theorem is true.

\begin{theorem}
\label{quadratical}Let $f\in L\log ^{2}L\left( T^{2}\right) $. Then
\begin{equation*}
\left\Vert S_{nn}\left( f\right) -f\right\Vert _{L_{1}\left( T^{2}\right)
}\rightarrow 0\text{ \ as \ }n\rightarrow \infty \text{.}
\end{equation*}
\end{theorem}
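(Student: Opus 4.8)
The plan is to derive the theorem from the classical pattern ``uniform boundedness plus convergence on a dense set.'' Concretely, I would prove the uniform estimate $\norm{S_{nn}(f)}_{L_1(T^2)}\le C\norm{f}_{L\log^2 L(T^2)}$ with $C$ independent of $n$, and note that if $P$ is a trigonometric polynomial then $S_{nn}(P)=P$ for all large $n$, so $S_{nn}(P)\to P$ trivially. Since $Q(u)=u\log^2(1+u)$ satisfies the $\Delta_2$-condition (because $Q(2u)/Q(u)$ stays bounded as $u\to\infty$), the space $L\log^2 L(T^2)$ is separable and trigonometric polynomials are dense in it; a Banach--Steinhaus argument then upgrades convergence on this dense set to convergence $\norm{S_{nn}(f)-f}_{L_1(T^2)}\to 0$ for every $f\in L\log^2 L(T^2)$. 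Everything thus reduces to the uniform bound.

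This bound cannot come from the kernel directly: since $S_{nn}(f)=f\ast(D_n\otimes D_n)$ with $(D_n\otimes D_n)(x,y)=D_n(x)D_n(y)$ and $\norm{D_n\otimes D_n}_{L_1}=\norm{D_n}_{L_1}^2\sim(\log n)^2\to\infty$, Young's inequality yields only the useless factor $(\log n)^2$. Instead I would use that $S_{nn}$ factors as the composition $S_n^{(x)}S_n^{(y)}$ of the one-dimensional partial-sum operators, and represent each $S_n$ through the conjugate function. Writing $M_j$ for multiplication by $e^{ij\cdot}$ and $H$ for the conjugate-function operator, one has a decomposition of the form $S_n=c_1 M_nHM_{-n}+c_2 M_{-n}HM_n+R_n$, where $R_n$ involves only a bounded number of individual Fourier coefficients and is uniformly bounded from $L_1$ into $L_\infty$. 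The crucial structural point is that $\abs{M_{\pm n}g}=\abs{g}$ pointwise, so modulation is an isometry of every Orlicz space; this is exactly what keeps all constants free of $n$.

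Substituting these representations for $S_n^{(x)}$ and $S_n^{(y)}$ and expanding, $S_{nn}(f)$ becomes a finite sum, with $n$-independent coefficients, of three kinds of terms: a principal term $M^{(x)}M^{(y)}H_xH_y\bigl(M^{(x)}M^{(y)}f\bigr)$ carrying the iterated conjugate function in both variables; mixed terms carrying $H$ in a single variable; and remainder terms carrying no conjugate function. The decisive estimate is the boundedness of the iterated conjugate function $H_xH_y:L\log^2 L(T^2)\to L_1(T^2)$, the two-parameter (product) form of Zygmund's theorem, in which the two applications of $H$ cost precisely two logarithmic factors --- this is exactly why $L\log^2 L$ is the natural domain. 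For the mixed terms one uses the inclusion $L\log^2 L\subset L\log L$ together with the one-dimensional bound $H:L\log L\to L_1$ applied in one variable and integrated in the other, while the remainder terms are handled trivially. Combining all pieces with the modulation-invariance of the $L\log^2 L$-norm yields the uniform estimate.

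The main obstacle is the product estimate $H_xH_y:L\log^2 L(T^2)\to L_1(T^2)$, and, subordinate to it, the bookkeeping needed to confirm that every term arising from expanding $S_n^{(x)}S_n^{(y)}$ is dominated by $\norm{f}_{L\log^2 L(T^2)}$ with a constant independent of $n$. Once the product theorem is available --- it may be quoted from the Jessen--Marcinkiewicz--Zygmund theory of the strong maximal function and iterated conjugate functions --- the remaining steps are purely organizational.
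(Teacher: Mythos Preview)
Your proposal is sound, and the outline---uniform boundedness of $S_{nn}:L\log^2 L(T^2)\to L_1(T^2)$ via the Riesz representation of partial sums through modulated conjugate functions, together with the Zygmund/Jessen--Marcinkiewicz--Zygmund bound $H_xH_y:L\log^2 L(T^2)\to L_1(T^2)$, and then Banach--Steinhaus using density of trigonometric polynomials (valid since $Q(u)=u\log^2(1+u)$ satisfies $\Delta_2$)---is precisely the classical route to this result.

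However, note that the paper does \emph{not} supply a proof of this theorem at all. It is stated with the preamble ``It is well-known that the following theorem is true'' and is treated as a known fact, implicitly referred to the monograph of Zhizhiashvili~\cite{Zh}. The theorem serves only as input: combined with the regularity of the N\"orlund logarithmic method it immediately yields Theorem~\ref{normconvergence}, whose sharpness (Theorem~\ref{normdivergence}) is the paper's actual contribution on the norm-convergence side. So there is nothing in the paper to compare your argument against; you have correctly reconstructed the standard proof of a result the author simply quotes.
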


Due to the inequality%
\begin{equation*}
\left\Vert t_{n}\left( f\right) -f\right\Vert _{L_{1}\left( T^{2}\right)
}\leq \frac{1}{l_{n}}\sum\limits_{k=0}^{n-1}\frac{\left\Vert S_{kk}\left(
f\right) -f\right\Vert _{L_{1}\left( T^{2}\right) }}{n-k}
\end{equation*}%
and the fact that the Nörlund logarithmic summability method is regular (%
\cite{Ha}, Ch. 3) Theorem \ref{quadratical} yield

\begin{theorem}
\label{normconvergence}Let $f\in L\log ^{2}L\left( T^{2}\right) $. Then%
\begin{equation*}
\frac{1}{l_{n}}\sum\limits_{k=0}^{n-1}\frac{\left\Vert S_{kk}\left( f\right)
-f\right\Vert _{L_{1}\left( T^{2}\right) }}{n-k}\rightarrow 0\text{ \ as \ }%
n\rightarrow \infty \text{.}
\end{equation*}
\end{theorem}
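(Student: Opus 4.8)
The plan is to deduce Theorem~\ref{normconvergence} directly from Theorem~\ref{quadratical} together with the regularity of the N\"orlund logarithmic summability method, exactly as the two displayed formulas preceding the statement already suggest. First I would set $a_k:=\left\Vert S_{kk}(f)-f\right\Vert_{L_1(T^2)}$ and observe that, by Theorem~\ref{quadratical}, the hypothesis $f\in L\log^2 L(T^2)$ guarantees $a_k\to 0$ as $k\to\infty$. The quantity to be estimated is then the weighted average
\begin{equation*}
\frac{1}{l_n}\sum_{k=0}^{n-1}\frac{a_k}{n-k},\qquad l_n=\sum_{k=1}^{n}\frac1k,
\end{equation*}
so the entire claim reduces to showing that this particular averaging procedure sends every null sequence to $0$.

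Next I would invoke the triangle inequality bound
\begin{equation*}
\left\Vert t_n(f)-f\right\Vert_{L_1(T^2)}\le \frac{1}{l_n}\sum_{k=0}^{n-1}\frac{\left\Vert S_{kk}(f)-f\right\Vert_{L_1(T^2)}}{n-k},
\end{equation*}
which is valid because $t_n(f)-f=\frac{1}{l_n}\sum_{k=0}^{n-1}\frac{S_{kk}(f)-f}{n-k}$ (the weights $\frac{1}{l_n(n-k)}$ summing to $1$ over $k=0,\dots,n-1$). This reduces norm convergence of $t_n(f)$ to convergence of the weighted average to $0$, and conversely shows the theorem is really a statement about the summability method alone.

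The core step is then a Toeplitz/Silverman-type argument for regularity. For the N\"orlund mean with weights $q_k=1/k$ one checks the two standard conditions: each individual weight $\frac{1}{l_n(n-k)}\to 0$ as $n\to\infty$ for fixed $k$ (since $l_n\to\infty$), and the row sums $\frac{1}{l_n}\sum_{k=0}^{n-1}\frac{1}{n-k}=\frac{1}{l_n}\sum_{j=1}^{n}\frac1j=1$ are identically $1$ and hence bounded. These are precisely the hypotheses of the classical regularity theorem cited as (\cite{Ha}, Ch.~3), so the method maps every convergent sequence to its limit; applying it to the null sequence $\{a_k\}$ gives the weighted average $\to 0$, and the theorem follows.

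I do not expect any genuine obstacle here: the result is a soft corollary rather than a substantive theorem, and the only thing to verify carefully is that the logarithmic weights satisfy the Silverman--Toeplitz conditions, which is immediate from $l_n\sim\log n\to\infty$. The single point worth stating explicitly is that one needs convergence of $a_k$ to $0$ (supplied by Theorem~\ref{quadratical}) rather than mere boundedness, since regularity alone would only preserve a nonzero limit; combined with the exact row-sum identity $=1$, this yields the clean conclusion without any quantitative rate estimate.
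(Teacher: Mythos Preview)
Your proposal is correct and follows essentially the same argument as the paper: set $a_k=\|S_{kk}(f)-f\|_{L_1(T^2)}$, use Theorem~\ref{quadratical} to get $a_k\to 0$, and then appeal to the regularity of the N\"orlund logarithmic method (which the paper simply cites from \cite{Ha}, Ch.~3, whereas you spell out the Silverman--Toeplitz verification). The triangle-inequality display you mention is not needed for the theorem as stated (it concerns the weighted average directly, not $\|t_n(f)-f\|$), but it does no harm.
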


In this paper we investigate the sharpness of Theorem \ref{normconvergence}.
Moreover, we prove

\begin{theorem}
\label{normdivergence}Let $L_{Q}\left( T^{2}\right) $ be an Orlicz space,
such that%
\begin{equation*}
L_{Q}\left( T^{2}\right) \nsubseteq L\log ^{2}L(T^{2}).
\end{equation*}%
Then \newline
\newline
a)
\begin{equation*}
\sup\limits_{n}\left\Vert t_{n}\right\Vert _{L_{Q}\left( T^{2}\right)
\rightarrow L_{1}\left( T^{2}\right) }=+\infty ;
\end{equation*}%
\newline
\newline
b) there exists a function $f\in L_{Q}\left( T^{2}\right) $ such that $%
t_{n}\left( f\right) $ does not converge to $f$ in $L_{1}\left( T^{2}\right)
$-norm.
\end{theorem}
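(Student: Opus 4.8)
The plan is to deduce part (b) from part (a) and to concentrate the real work on part (a). Each $t_{n}$ is a bounded linear operator from the Banach space $L_{Q}(T^{2})$ into $L_{1}(T^{2})$, so once (a) furnishes $\sup_{n}\norm{t_{n}}_{L_{Q}\to L_{1}}=+\infty$, the Banach--Steinhaus theorem produces a single $f\in L_{Q}(T^{2})$ with $\sup_{n}\norm{t_{n}(f)}_{L_{1}}=+\infty$; since every $L_{1}$-convergent sequence is bounded, $t_{n}(f)$ cannot converge to $f$ in $L_{1}$-norm, which is exactly (b). I would also record, from Theorem \ref{normconvergence} combined with Banach--Steinhaus applied on $L\log^{2}L(T^{2})$, the matching bound $\sup_{n}\norm{t_{n}}_{L\log^{2}L\to L_{1}}<\infty$; this confirms that $L\log^{2}L$ is precisely the threshold and tells me the ratios I construct must blow up exactly when $Q$ is subcritical.

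For (a) I first convert the hypothesis into a usable growth statement. By the inclusion criterion for Orlicz spaces over a finite measure space, $L_{Q}\subseteq L\log^{2}L$ would force $u\log^{2}(1+u)\le Q(Cu)$ for all large $u$; the failure of the inclusion therefore yields a sequence $u_{k}\uparrow\infty$ with
\begin{equation*}
\frac{Q(u_{k})}{u_{k}\log^{2}u_{k}}\longrightarrow 0 .
\end{equation*}
As test functions I take the indicators $f_{n}=\chi_{\Delta_{n}}$ of the squares $\Delta_{n}=[-1/n,1/n]^{2}$. A direct computation with the Luxemburg norm gives $\norm{f_{n}}_{L_{Q}}=1/Q^{-1}(n^{2}/4)$. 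Matching scales by choosing $n_{k}\asymp\sqrt{Q(u_{k})}$ (so that $Q^{-1}(n_{k}^{2}/4)\ge u_{k}$ and $\log n_{k}\asymp\log u_{k}$), the operator norm is bounded below by
\begin{equation*}
\norm{t_{n_{k}}}_{L_{Q}\to L_{1}}\ge\frac{\norm{t_{n_{k}}(f_{n_{k}})}_{L_{1}}}{\norm{f_{n_{k}}}_{L_{Q}}}\ge c\,\frac{\log^{2}n_{k}}{n_{k}^{2}}\,Q^{-1}\!\left(\frac{n_{k}^{2}}{4}\right)\asymp\frac{u_{k}\log^{2}u_{k}}{Q(u_{k})}\longrightarrow+\infty ,
\end{equation*}
provided the kernel lower bound $\norm{t_{n}(f_{n})}_{L_{1}}\ge c\,\log^{2}n/n^{2}$ is available.

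That kernel bound is the crux, and I expect it to be the main obstacle. Writing $g_{k}=D_{k}*\chi_{[-1/n,1/n]}$ and using that $\chi_{\Delta_{n}}$ tensors, one has the sum-of-products representation
\begin{equation*}
t_{n}(f_{n})(x,y)=F_{n}*\chi_{\Delta_{n}}(x,y)=\frac{1}{l_{n}}\sum_{k=0}^{n-1}\frac{g_{k}(x)\,g_{k}(y)}{n-k},
\end{equation*}
so the goal reduces to showing that this diagonally coupled kernel has $L_{1}(T^{2})$-norm at least $c\,\log^{2}n/n^{2}$. The difficulty is genuinely two-dimensional: because the two Dirichlet factors share the single summation index $k$, the norm does not split as the square of a one-dimensional quantity, and one must control the oscillatory cancellation in the $k$-sum. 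Concretely $\norm{g_{k}}_{L_{1}(T)}\asymp\log k/n$, so a term $g_{k}\otimes g_{k}$ contributes $\asymp(\log k/n)^{2}$; what must be proved is that after the weights $1/(n-k)$ and the averaging $1/l_{n}$ these contributions reinforce rather than cancel, recovering the full $\log^{2}n$.

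I would establish this by testing against a product function $\psi=\psi_{1}\otimes\psi_{2}$ with $\norm{\psi}_{\infty}\le 1$ adapted to the common sign pattern of the top Dirichlet kernels (the terms with $n-k$ small, which carry the dominant weight), thereby reducing the estimate to the one-dimensional lower bound $\norm{l_{n}^{-1}\sum_{k}g_{k}/(n-k)}_{L_{1}(T)}\ge c\,\log n/n$. This is the precise analogue of the fact that the N\"orlund logarithmic kernel, unlike the Fej\'er kernel, has unbounded Lebesgue constants, and it is exactly the point where the contrast with the Marcinkiewicz means enters: there the summation kernel is bounded in $L_{1}$, whereas here the averaging fails to tame the singularity, so the sharpness of $L\log^{2}L$ survives. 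With this lemma in hand the displayed chain gives (a), and Banach--Steinhaus then gives (b).
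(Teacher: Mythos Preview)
Your overall architecture matches the paper's: part (b) follows from part (a) by Banach--Steinhaus, the hypothesis $L_{Q}\nsubseteq L\log^{2}L$ is converted into $\limsup u\log^{2}u/Q(u)=+\infty$, and one tests the operator on (normalized) indicators of squares of side $\asymp 1/N$ against the index $N$ of $t_{N}$. The algebra you wrote for matching scales is fine. The paper uses the dyadic indices $N=2^{2n}$ and the normalized indicator $\gamma_{n}^{-2}1_{[0,\gamma_{n}]^{2}}$ with $\gamma_{n}\asymp 2^{-2n}$, obtaining $\norm{t_{2^{2n}}(\cdot)}_{L_{1}}\ge cn^{2}$ and $\norm{\cdot}_{L_{Q}}\le cQ(2^{4n})/2^{4n}$, which after rescaling is exactly your target inequality $\norm{t_{N}(\chi_{\Delta_{N}})}_{L_{1}}\ge c(\log N)^{2}/N^{2}$.

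The gap is precisely at the step you flag as the crux. Testing against a product $\psi_{1}\otimes\psi_{2}$ does \emph{not} reduce the problem to the one-dimensional $L_{1}$ lower bound for $l_{n}^{-1}\sum_{k}g_{k}/(n-k)$: duality gives
\[
\langle t_{n}(f_{n}),\psi_{1}\otimes\psi_{2}\rangle=\frac{1}{l_{n}}\sum_{k=0}^{n-1}\frac{\langle g_{k},\psi_{1}\rangle\,\langle g_{k},\psi_{2}\rangle}{n-k},
\]
a single coupled sum, not a product of two one-dimensional sums. To make this large you would need $|\langle g_{k},\psi_{j}\rangle|\gtrsim\log n/n$ with a consistent sign for a range of $k$ whose harmonic weights contribute a full $\log n$; that forces the range to have length at least a positive power of $n$. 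But the sign pattern of $D_{k}$ (zeros near $j\pi/(k+\tfrac12)$) shifts by order $1/n$ as $k$ varies by $1$, so a fixed bounded $\psi_{j}$ can be ``adapted'' to only $O(1)$ consecutive Dirichlet kernels. Hence the product-test-function route, as stated, cannot recover both logarithms.

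The paper bypasses duality entirely and proves a \emph{pointwise} lower bound on the two-dimensional kernel. Writing $D_{2^{2n}-k}(x)D_{2^{2n}-k}(y)$ via the sine representation and expanding $\sin(A-kx)\sin(B-ky)$ with product-to-sum identities, one obtains $l_{2^{2n}}F_{2^{2n}}(x,y)$ as a combination of four factors of the form
\[
\frac{\text{trig}\bigl((2^{2n}+\tfrac12)x\bigr)}{2\sin\tfrac{x}{2}}\cdot\frac{\text{trig}\bigl((2^{2n}+\tfrac12)y\bigr)}{2\sin\tfrac{y}{2}}
\]
multiplied by the partial sums $\sum_{k\le 2^{2n}}\cos k(x\pm y)/k$ and $\sum_{k\le 2^{2n}}\sin k(x\pm y)/k$. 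The sine sums are uniformly bounded; for the cosine sums one uses the Abel-type identity that expresses $\sum_{k}\cos ku/k$ through Fej\'er kernels $\sin^{2}((k+1)u/2)/\sin^{2}(u/2)$, which are nonnegative. On the explicit set
\[
I_{n}=\bigcup_{m,l=1}^{2^{n-3}}[\alpha_{mn},\beta_{mn}]\times[\alpha_{ln},\beta_{ln}],\qquad \alpha_{mn}=\frac{\arccos(1/4)+2\pi m}{2^{2n}+1/2},\ \beta_{mn}=\frac{\pi/2+2\pi m}{2^{2n}+1/2},
\]
one has $\sin((2^{2n}+\tfrac12)x)>1/2$ and $|\cos((2^{2n}+\tfrac12)x)|\le 1/4$ (and likewise in $y$), which forces the positive Fej\'er terms to dominate and yields $F_{2^{2n}}(x,y)\ge c/(xy)$ there. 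Integrating $1/(xy)$ over $J_{n}\subset I_{n}$ then gives $\norm{t_{2^{2n}}(\gamma_{n}^{-2}1_{[0,\gamma_{n}]^{2}})}_{L_{1}}\ge cn^{2}$. This pointwise kernel lemma is the substantive ingredient; your proposal does not supply a substitute for it.
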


\begin{theorem}
\label{measuredivergence}Let $L_{Q}(T^{2})$ be an Orlicz space, such that

\begin{equation*}
L_{Q}(T^{2})\nsubseteq L\log L(T^{2}).
\end{equation*}

Then the set of the functions from the Orlicz space $L_{Q}(T^{2})$ with
logarithmic means of quadratical partial sums of double Fourier series
convergent in measure on $T^{2}$ is of first Baire category in $%
L_{Q}(T^{2}). $
\end{theorem}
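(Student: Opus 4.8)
The plan is to establish divergence in measure via a Baire category argument built on the uniform boundedness principle, adapted to convergence in measure rather than norm. The standard strategy runs as follows. Convergence in measure is metrizable by a complete metric on $L_0(T^2)$, and the embedding $L_Q(T^2)\hookrightarrow L_0(T^2)$ is continuous. The operators $t_n\colon L_Q(T^2)\to L_0(T^2)$ are continuous and linear. The set of functions for which $t_n(f)$ converges in measure is precisely the set on which the sequence $\set{t_n(f)}$ is Cauchy in the metric of convergence in measure. A classical theorem of Banach–Steinhaus type for such settings (the Stein–Nikishin phenomenon) states that this set is either all of $L_Q(T^2)$ or is of first category; hence it suffices to exhibit a single $f\in L_Q(T^2)$ whose logarithmic means $t_n(f)$ fail to converge in measure. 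Equivalently, the set is of first category as soon as the operators $t_n$ are not uniformly bounded from $L_Q$ into $L_0$ in the appropriate weak-type sense.

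First I would reduce the problem to a quantitative lower estimate. Since $L_Q(T^2)\nsubseteq L\log L(T^2)$, there is a Young function growth gap that I would exploit to construct test functions. The concrete step is to produce a sequence of functions $f_N\in L_Q(T^2)$ with $\norm{f_N}_{L_Q(T^2)}$ controlled (say bounded, or normalized), yet such that the measure of the set where $\abs{t_{n_N}(f_N,x,y)}$ exceeds a fixed threshold $\lambda>0$ stays bounded below by a positive constant, while the construction forces the required lower bound to blow up relative to the $L_Q$-norm. The natural candidates are functions concentrated near the diagonal singularity of the kernel, or indicator-type functions on small squares, chosen so that the Dirichlet-kernel product $D_k(t)D_k(s)$ inside $F_n$ resonates. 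The heart of the matter is the kernel $F_n(t,s)=\frac{1}{l_n}\sum_{k=0}^{n-1}\frac{D_k(t)D_k(s)}{n-k}$, whose one-dimensional factors carry the logarithmic $\log n$ growth of the Lebesgue constants; the product structure and the $\frac{1}{n-k}$ weighting are what push the relevant estimate past $L\log L$ down to the $L\log L$ threshold (as opposed to the $L\log^2 L$ threshold governing norm convergence in Theorem \ref{normconvergence}).

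The key steps, in order, are: (i) fix the complete metric for convergence in measure and verify that each $t_n$ is continuous $L_Q(T^2)\to L_0(T^2)$; (ii) invoke the category dichotomy so that first-category of the convergence set reduces to the failure of an a.e.\ convergence / weak-type uniform bound; (iii) analyze the kernel $F_n$ to extract, from the hypothesis $L_Q\nsubseteq L\log L$, a sequence of normalized test functions witnessing the breakdown of convergence in measure; (iv) conclude by the Stein–Nikishin / Banach–Steinhaus mechanism that the convergence set is of first Baire category. Throughout I would lean on the quantitative counterexamples of Getsadze \cite{Ge} and Konyagin \cite{Kon} cited in the introduction, transplanting their constructions for $S_{n,n}$ at the $L\log L$ boundary to the logarithmic-mean kernel $F_n$.

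The hard part will be step (iii): producing the explicit test functions and the matching lower estimate for $\m{\set{(x,y):\abs{t_n(f)}>\lambda}}$. The difficulty is twofold. First, the logarithmic averaging $\frac{1}{l_n}\sum \frac{1}{n-k}$ smooths the partial sums, so one must verify that it does not destroy the diagonal concentration responsible for divergence — this is exactly the content of the author's remark that the logarithmic means behave like the bare partial sums $S_{n,n}$ rather than like the well-behaved Marcinkiewicz $(C,1)$ means. Second, the construction must respect the abstract Orlicz hypothesis $L_Q\nsubseteq L\log L$ rather than a specific space, so the test functions have to be calibrated to the gap between $Q(u)$ and $u\log(1+u)$ uniformly; extracting a clean quantitative bound at this level of generality, rather than for a fixed $Q$, is where the real technical work lies.
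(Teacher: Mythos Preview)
Your outline has the right architecture --- a Baire category reduction plus test functions built from indicators of small squares, with the Orlicz hypothesis $L_Q\nsubseteq L\log L$ translated into $\liminf_{u\to\infty}Q(u)/(u\log u)=0$ --- and this is exactly the skeleton the paper uses. But there is one concrete mechanism you have not identified, and without it step~(iii) does not close.

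The category lemma the paper invokes (Lemma~\ref{GGT}) requires functions $\xi_k$ in the unit ball of $L_Q$ and levels $\nu_k\to\infty$ with
\[
\inf_k\ \mes\{(x,y):|t_{m_k}(\xi_k;x,y)|>\nu_k\}>0,
\]
i.e.\ a \emph{uniform} positive lower bound on the exceptional measure. The kernel estimate (Corollary~\ref{pointwise}), applied to the natural test function $\gamma_n^{-2}\mathbf{1}_{[0,\gamma_n]^2}$, only yields
\[
\mes\bigl\{(x,y):|t_{2^{2n}}(\gamma_n^{-2}\mathbf{1}_{[0,\gamma_n]^2};x,y)|>c_1 2^{3n}\bigr\}\ >\ \frac{c_2 n}{2^{3n}},
\]
which tends to zero. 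The paper bridges this gap with Garsia's translation lemma (Theorem~\ref{Garsia}): because $t_n$ commutes with the torus translations, one can take $r\sim 2^{3n}/n$ signed translates of the indicator, sum them, and force the exceptional set to have measure at least $1/8$. Only after this amplification are the resulting functions renormalized (using the growth gap $Q(2^{4n_k})/(2^{4n_k}n_k)\to 0$) into $\xi_k$'s lying in the unit ball while $\nu_k\to\infty$.

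Your plan to ``lean on the constructions of Getsadze and Konyagin'' or to invoke a Stein--Nikishin dichotomy is not wrong in spirit, but it hides exactly this step: those frameworks also need, at bottom, a quantitative weak-type blow-up that a single localized bump does not provide here. The Garsia averaging trick (or an equivalent) is the missing ingredient you should name explicitly; once you have it, the rest of your outline matches the paper's proof essentially line for line.
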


\begin{corollary}
\label{measuredivergence2}Let $\varphi :[0,\infty \lbrack \rightarrow
\lbrack 0,\infty \lbrack $ be a nondecreasing function satisfying for $%
x\rightarrow +\infty $ the condition

\begin{equation*}
\varphi (x)=o(x\log x).
\end{equation*}

Then there exists the function $f\in L_{1}(T^{2})$ such that

a)
\begin{equation*}
\iint\limits_{T^{2}}\varphi (\left\vert f(x,y)\right\vert )dxdy<\infty ;
\end{equation*}

b) logarithmic means of quadratical partial sums of double Fourier series of
$f$ diverges in measure on $T^{2}$.
\end{corollary}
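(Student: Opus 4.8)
The plan is to deduce the corollary from Theorem \ref{measuredivergence} by manufacturing an Orlicz space that sits strictly between $L\log L(T^2)$ and the class $\{\,f:\iint_{T^2}\varphi(|f|)<\infty\,\}$. Concretely, I would look for a Young function $Q$ with two properties: (i) $L_Q(T^2)\nsubseteq L\log L(T^2)$, so that Theorem \ref{measuredivergence} applies to $L_Q$; and (ii) $\varphi(u)\le Q(u)$ for all sufficiently large $u$, so that the integrability condition in a) holds automatically for every $f$ in the closed unit ball of $L_Q(T^2)$. Property (i) is implied by $\liminf_{u\to\infty}Q(u)/(u\log u)=0$: indeed, for Orlicz spaces on the finite measure space $T^2$ one has $L_Q\subseteq L\log L$ iff $u\log(1+u)\le Q(cu)$ for some $c>0$ and all large $u$, and the stated liminf condition contradicts this for every $c$.

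The heart of the argument is therefore the construction of such a $Q$, and this is where the hypothesis $\varphi(x)=o(x\log x)$ enters in its uniform form: for every $\eps>0$ there is $U_\eps$ with $\varphi(u)\le\eps\,u\log u$ for all $u\ge U_\eps$. I would encode this by setting
\[
\eps(u):=\sup_{t\ge u}\frac{\varphi(t)}{t\log t}\qquad(u\ge 3),
\]
which is nonincreasing with $\eps(u)\to0$, and then putting $\tilde Q(u):=\sqrt{\eps(u)}\,u\log u$. Since $\sqrt{\eps}\ge\eps$ once $\eps\le1$, one gets $\tilde Q(u)\ge\eps(u)\,u\log u\ge\varphi(u)$ for large $u$, while $\tilde Q(u)/(u\log u)=\sqrt{\eps(u)}\to0$. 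I would then let $Q$ be the least convex majorant of $\max\{\tilde Q(u),R(u)\}$, where $R$ is a fixed Young function with $R(u)/u\to\infty$ and $R(u)=o(u\log u)$ (for instance $R(u)\asymp u\sqrt{\log(u+2)}$), the role of $R$ being to force genuine superlinearity $Q(u)/u\to\infty$. After the customary adjustments near the origin and the even extension, $Q$ is a Young function with $Q\ge\varphi$ for large $u$; and $Q=o(u\log u)$ follows by dominating its convex majorant, for each $\delta>0$, by the convex function $\max\{C_\delta u,\ \delta\,u\log u\}$, which majorizes $\max\{\tilde Q,R\}$ everywhere once $C_\delta$ is large. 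In particular $\liminf_{u}Q(u)/(u\log u)=0$, giving (i).

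With $Q$ in hand I would apply Theorem \ref{measuredivergence} to $L_Q(T^2)$: the set of $f\in L_Q(T^2)$ for which $t_n(f)$ converges in measure on $T^2$ is of first category, so its complement is residual in the Banach space $L_Q(T^2)$ and, by the Baire property, meets the open unit ball. Choosing $f$ in this intersection gives a function with $\|f\|_{L_Q(T^2)}<1$ whose means $t_n(f)$ diverge in measure, which is b). For a), normalization yields $\iint_{T^2}Q(|f|)\le1$, whence, splitting $T^2$ according to $|f|\le M$ or $|f|>M$ for an $M$ beyond which $\varphi\le Q$,
\[
\iint_{T^2}\varphi(|f|)\,dxdy\le \varphi(M)\,\mes(T^2)+\iint_{T^2}Q(|f|)\,dxdy<\infty,
\]
using that $\varphi$ is nondecreasing; and $f\in L_1(T^2)$ because $Q$ is superlinear on a finite measure space, so $L_Q(T^2)\subseteq L_1(T^2)$.

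The main obstacle is the construction of $Q$ in the second paragraph: one must squeeze a bona fide Young function between $\varphi$ and the $L\log L$ threshold, keeping it convex and strictly superlinear while arranging $Q(u)/(u\log u)\to0$. The insertion of the auxiliary function $R$ and the convex-majorant comparison against $\max\{C_\delta u,\delta u\log u\}$ are the delicate points; by contrast, the Baire-category extraction of $f$ and the integrability estimate in a) are routine once the theorem and $Q$ are available.
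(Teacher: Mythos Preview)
Your approach is correct and is essentially the paper's own: reduce the corollary to Theorem~\ref{measuredivergence} by producing a Young function $Q$ with $\varphi\le Q$ for large $u$ and $Q(u)=o(u\log u)$, hence $L_Q(T^2)\nsubseteq L\log L(T^2)$, and then pick $f$ in the residual set of divergence inside the unit ball of $L_Q$. The paper merely outsources the construction of $Q$ to Lemma~\ref{GGT2} (proved in \cite{GGT2}), whereas you carry it out explicitly via the convex majorant of $\max\{\sqrt{\eps(u)}\,u\log u,\,R(u)\}$; the remaining Baire-category extraction and the integrability check for part a) are identical in spirit.
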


\section{Auxiliary Results}

We apply the reasoning of \cite{G} formulated as the following proposition
in particular case.

\begin{theorem}[Garsia]
\label{Garsia}  Let $H:L_{1}(T^{2})\rightarrow L_{0}(T^{2})$ be a linear
continuous operator, which commutes with family of translations $\mathcal{E}$%
, i. e. $\forall E\in \mathcal{E}\quad \forall f\in L_{1}(T^{2})\quad HEf=EHf
$. Let $\Vert f\Vert _{L_{1}(T^{2})}=1$ and $\lambda >1$. 
Then for any $1\leq r\in \mathbb{N}$ under condition mes$\{\left( x,y\right)
\in T^{2}:|Hf|>\lambda \}\geq \frac{1}{r}$ there exist $%
E_{1},...,E_{r},E_{1}^{\prime },...,E_{r}^{\prime }\in \mathcal{E}$ and $%
\varepsilon _{i}=\pm 1,\quad i=1,...,r$ such that
\begin{equation*}
\mes\{\left( x,y\right) \in T^{2}:|H(\sum_{i=1}^{r}\varepsilon
_{i}f(E_{i}x,E_{i}^{\prime }y)|>\lambda \}\geq \frac{1}{8}.
\end{equation*}
\end{theorem}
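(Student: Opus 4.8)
The plan is to combine a random translation (covering) argument with a random sign argument, exploiting that $H$ is linear and commutes with $\mathcal{E}$. Put $A=\set{(x,y)\in T^2:\abs{Hf(x,y)}>\lambda}$, so that $\mes(A)\ge 1/r$ by hypothesis. For any translations $(E_i,E_i')\in\mathcal{E}$ and any signs $\varepsilon_i=\pm1$, linearity together with $HEf=EHf$ gives, pointwise,
\[
H\Big(\sum_{i=1}^{r}\varepsilon_i f(E_i\cdot,E_i'\cdot)\Big)(x,y)=\sum_{i=1}^{r}\varepsilon_i\,(Hf)(E_ix,E_i'y).
\]
Thus it suffices to choose the $E_i,E_i',\varepsilon_i$ so that the modulus of the right-hand side exceeds $\lambda$ on a set of measure $\ge 1/8$, and I would locate such a choice probabilistically.

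I would let $(E_1,E_1'),\dots,(E_r,E_r')$ be independent and distributed according to normalized Haar measure on the translation group of $T^2$, and let $\varepsilon_1,\dots,\varepsilon_r$ be independent Rademacher variables, independent of the translations. The sign step is the elementary observation that at a point $(x,y)$ for which some index $j$ satisfies $\abs{(Hf)(E_jx,E_j'y)}>\lambda$, one may write the sum as $\varepsilon_j c_j+R$ with $\abs{c_j}>\lambda$ and $R$ the remaining terms; since $\abs{c_j+R}+\abs{-c_j+R}\ge 2\abs{c_j}>2\lambda$, at least one value of $\varepsilon_j$ produces modulus $>\lambda$. Hence, whenever the translations place $(x,y)$ in $\bigcup_i(E_i,E_i')^{-1}A$, averaging over the signs alone gives
\[
\mathbb{P}_{\varepsilon}\Big(\,\abs{\textstyle\sum_{i}\varepsilon_i(Hf)(E_ix,E_i'y)}>\lambda\,\Big)\ge \tfrac12 ,
\]
and this reasoning is valid verbatim for complex values.

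For fixed $(x,y)$ the point $(E_ix,E_i'y)$ is uniformly distributed on $T^2$, so $\mathbb{P}((E_ix,E_i'y)\in A)=\mes(A)/\mes(T^2)$ and, by independence, the probability that some index lands in $A$ equals $1-(1-\mes(A)/\mes(T^2))^{r}$. Combining this with the sign estimate and integrating over $(x,y)$ via Fubini, the expected measure of the exceptional set satisfies
\[
\mathbb{E}\,\mes\set{(x,y):\abs{\textstyle\sum_i\varepsilon_i(Hf)(E_ix,E_i'y)}>\lambda}\ \ge\ \tfrac12\,\mes(T^2)\Big(1-\big(1-\tfrac{\mes(A)}{\mes(T^2)}\big)^{r}\Big).
\]
Using $\mes(A)\ge 1/r$, the bound $(1-t)^{r}\le e^{-rt}$, the monotonicity of $s\mapsto(1-e^{-s})/s$, and $\mes(T^2)=4\pi^2\ge1$, the right-hand side is at least $\tfrac12(1-e^{-1})>\tfrac18$, uniformly in $r$. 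Since the expected measure exceeds $1/8$, some deterministic choice of $E_1,\dots,E_r,E_1',\dots,E_r'\in\mathcal{E}$ and $\varepsilon_1,\dots,\varepsilon_r$ realizes measure $\ge 1/8$, which is the assertion.

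The delicate point is coordinating the two randomizations: the sign bound $\tfrac12$ holds at every point already covered by a large translate, while the covering probability controls how many points are covered, and one must keep these conditionally independent so that Fubini yields a constant independent of $r$. The hypotheses $\lambda>1$ and $\norm{f}_{L_1(T^2)}=1$ play no role in the lemma itself; they merely fix the normalization under which it will be applied.
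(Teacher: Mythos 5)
Your proof is correct, but there is nothing in the paper to measure it against: the paper does not prove this statement at all, it merely records it as a known result, prefaced by the remark that it applies ``the reasoning of'' Garsia's 1970 book, with a bare citation. So where the paper supplies a reference, you supply an argument --- and the argument you give (independent Haar-distributed translations, independent Rademacher signs, the pointwise dichotomy $\abs{c_j+R}+\abs{-c_j+R}\ge 2\abs{c_j}>2\lambda$ showing that any covered point survives the sign randomization with conditional probability at least $\tfrac12$, and Fubini to convert the pointwise bound $\tfrac12\bigl(1-(1-\mes(A)/4\pi^{2})^{r}\bigr)$ into a bound on the expected measure of the exceptional set) is essentially the classical randomization argument that underlies Garsia's lemma, so in spirit you have reconstructed the cited proof rather than found a new route. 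Your constant-chasing is sound: with $s=r\,\mes(A)/(4\pi^{2})\ge 1/(4\pi^{2})$, the bound $(1-t)^{r}\le e^{-rt}$ and the monotonicity of $s\mapsto(1-e^{-s})/s$ give $\tfrac12\cdot 4\pi^{2}\,(1-e^{-s})\ge\tfrac12(1-e^{-1})>\tfrac18$ uniformly in $r$, and an outcome realizing at least the expectation exists, which is all that is needed. Two points worth making explicit in a full write-up: first, your covering step tacitly assumes $\mathcal{E}$ is the full translation group of $T$ carrying Haar measure (otherwise $(E_ix,E_i'y)$ need not be uniform on $T^{2}$); this is the intended reading in the paper, where $\mathcal{E}$ is applied to indicator functions of squares, but it should be stated. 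Second, Fubini requires joint measurability of $(\tau,\sigma,x,y)\mapsto (Hf)(x+\tau,y+\sigma)$, which is immediate since $Hf$ is a fixed measurable function but deserves a sentence. Your closing observation is also accurate: only linearity and translation-commutation are used, and the hypotheses $\norm{f}_{L_{1}(T^{2})}=1$, $\lambda>1$ and continuity of $H$ matter only for how the lemma is invoked in the proof of Theorem \ref{measuredivergence}.
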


\begin{lemma}
\label{GGT}Let $\{H_{m}\}_{m=1}^{\infty }$ be a sequence of linear continues
operators, acting from Orlicz space $L_{Q}(T^{2})$ in to the space $%
L_{0}(T^{2})$. Suppose that there exists the sequence of functions $\{\xi
_{k}\}_{k=1}^{\infty }$ from unit bull $S_{Q}(0,1)$ of space $L_{Q}(T^{2})$,
sequences of integers $\{m_{k}\}_{k=1}^{\infty }$ and $\{\nu
_{k}\}_{k=1}^{\infty }$ increasing to infinity such that

\begin{equation*}
\varepsilon _{0}=\inf_{k}\mes\{\left( x,y\right) \in T^{2}:|H_{m_{k}}\xi
_{k}\left( x,y\right) |>\nu _{k}\}>0.
\end{equation*}

Then $B$ - the set of functions $f$ from space $L_{Q}(T^{2})$, for which the
sequence $\{H_{m}f\}$ converges in measure to an a. e. finite function is of
first Baire category in space $L_{Q}(T^{2})$.
\end{lemma}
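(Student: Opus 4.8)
The plan is to prove the lemma by a Baire category argument run by contradiction: I would reduce the convergence assumption to a quantitative \emph{boundedness in measure} property, and then show that this property fails robustly near the center of any ball, because one can always perturb by a suitably scaled divergence witness $\xi_k$.

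First I would record the elementary fact that if $\{H_mf\}$ converges in measure to an almost everywhere finite function, then $\{H_mf\}$ is bounded in measure, i.e.\ $\lim_{\lambda\to\infty}\sup_m\mes\{|H_mf|>\lambda\}=0$. Indeed, if $H_mf\to h$ in measure with $h$ finite a.e., one picks $\lambda_0$ with $\mes\{|h|>\lambda_0\}$ small, uses $\{|H_mf|>2\lambda_0\}\subseteq\{|h|>\lambda_0\}\cup\{|H_mf-h|>\lambda_0\}$ for large $m$, and absorbs the finitely many remaining indices (each $H_mf$ being finite a.e.). In particular, fixing the level $\varepsilon_0/4$, every $f\in B$ belongs to some set
\[
V_n:=\Big\{f\in L_Q(T^2):\ \sup_m\mes\{|H_mf|>n\}\le \tfrac{\varepsilon_0}{4}\Big\},
\]
so that $B\subseteq\bigcup_{n\in\mathbb{N}}V_n$.

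Next I would check that each $V_n$ is closed. If $f_i\to f$ in $L_Q(T^2)$ then, by continuity of $H_m$, $H_mf_i\to H_mf$ in measure, and the inclusion $\{|H_mf|>n+\eps\}\subseteq\{|H_mf_i|>n\}\cup\{|H_mf_i-H_mf|>\eps\}$ gives $\mes\{|H_mf|>n+\eps\}\le\liminf_i\mes\{|H_mf_i|>n\}\le\varepsilon_0/4$; letting $\eps\downarrow0$ (continuity of measure from below) yields $f\in V_n$. Now suppose, for contradiction, that $B$ is of the second Baire category. Then $\bigcup_nV_n$ is non-meager, so some $V_{n_0}$ is non-meager; being closed in the Banach space $L_Q(T^2)$, it has non-empty interior and hence contains a ball $S_Q(g_0,\delta_0)$.

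The contradiction comes from the divergence witnesses. Put $t:=\delta_0/2$ and, for each $k$, $f_k:=g_0+t\xi_k$; since $\|\xi_k\|_{L_Q}\le1$ we have $\|f_k-g_0\|_{L_Q}\le t<\delta_0$, so $f_k\in V_{n_0}$. On the set $A_k:=\{|H_{m_k}\xi_k|>\nu_k\}$, of measure at least $\varepsilon_0$, linearity gives $|H_{m_k}f_k|>t\nu_k-|H_{m_k}g_0|$. The delicate point, and the main obstacle, is controlling the functions $H_{m_k}g_0$, which vary with $k$; this is resolved precisely because the \emph{center} $g_0$ itself lies in $V_{n_0}$, so $\mes\{|H_{m_k}g_0|>n_0\}\le\varepsilon_0/4$ for every $k$. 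Choosing $k$ so large that $t\nu_k/2>n_0$ (possible since $\nu_k\to\infty$), one has $\{|H_{m_k}g_0|>t\nu_k/2\}\subseteq\{|H_{m_k}g_0|>n_0\}$, a set of measure at most $\varepsilon_0/4$; hence on $A_k\setminus\{|H_{m_k}g_0|>t\nu_k/2\}$, of measure at least $3\varepsilon_0/4$, we get $|H_{m_k}f_k|>t\nu_k/2>n_0$. Thus $\mes\{|H_{m_k}f_k|>n_0\}\ge 3\varepsilon_0/4>\varepsilon_0/4$, contradicting $f_k\in V_{n_0}$. Therefore $B$ cannot be of the second category, i.e.\ $B$ is of first Baire category in $L_Q(T^2)$.
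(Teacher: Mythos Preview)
Your argument is correct. The paper itself does not give a proof of this lemma; it simply states that ``The proof of Lemma~\ref{GGT} can be found in \cite{GGT}.'' What you have written is the standard Banach--Steinhaus/condensation-of-singularities argument adapted to convergence in measure: reduce convergence to boundedness in measure, cover $B$ by the closed sets $V_n$, extract a ball inside some $V_{n_0}$ by Baire, and then perturb the center $g_0$ by a scaled witness $t\xi_k$ to force a contradiction. All the steps (closedness of $V_n$ via continuity of each $H_m$ into $L_0$, completeness of $L_Q$, the triangle-inequality splitting, and the choice of $k$ so that $t\nu_k/2>n_0$) are handled cleanly. This is exactly the type of argument one finds in \cite{GGT}, so your proof aligns with the cited source even though the present paper omits the details.
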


The proof of Lemma \ref{GGT} can be found in \cite{GGT}.

\begin{lemma}
\label{GGT2}Let $L_{\Phi }\left( T^{2}\right) $ be an Orlicz space and let $%
\varphi :[0,\infty )\rightarrow \lbrack 0,\infty )$ be measurable function
with condition $\varphi \left( x\right) =o\left( \Phi \left( x\right)
\right) $ as $x\rightarrow \infty .$ Then there exists Orlicz space $%
L_{\omega }\left( T^{2}\right) $, such that $\omega \left( x\right) =o\left(
\Phi \left( x\right) \right) \,\,$ as $x\rightarrow \infty $, and $\omega
\left( x\right) \geq \varphi \left( x\right) $ for $x\geq c\geq 0.$
\end{lemma}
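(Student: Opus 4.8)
The plan is to dominate $\varphi$ by an honest Young function without letting it grow as fast as $\Phi$. First I would reduce to the tail: since $\varphi(x)=o(\Phi(x))$, fix $c\ge 0$ with $\varphi(x)\le\Phi(x)$ for all $x\ge c$ (the conclusion only requires $\omega\ge\varphi$ on $[c,\infty)$, so the behaviour of $\varphi$ on $[0,c)$ is irrelevant). Next I would pass to the nondecreasing majorant
\[
\Psi(x):=\sup_{c\le t\le x}\varphi(t)\quad(x\ge c),\qquad \Psi(x):=0\quad(0\le x<c).
\]
Because $\Phi$ is increasing, $\Psi(x)\le\sup_{c\le t\le x}\Phi(t)=\Phi(x)<\infty$, so $\Psi$ is finite and nondecreasing, and clearly $\Psi\ge\varphi$ on $[c,\infty)$. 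The first routine point is that $\Psi$ is still $o(\Phi)$: given $\eps>0$ pick $B\ge c$ with $\varphi(t)\le\eps\Phi(t)$ for $t\ge B$; then for $x\ge B$ one has $\Psi(x)\le\max\{\Phi(B),\,\eps\Phi(x)\}$, which equals $\eps\Phi(x)$ once $\Phi(x)\ge\Phi(B)/\eps$, whence $\Psi(x)/\Phi(x)\to0$.

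Second, I would let $\omega_0$ be the least convex majorant of $\Psi$ on $[0,\infty)$; it exists and is finite since the convex function $\Phi$ already majorizes $\Psi$, so $\omega_0\le\Phi$, and of course $\omega_0\ge\Psi\ge\varphi$ on $[c,\infty)$. The crucial step is to show $\omega_0=o(\Phi)$, and here I would exploit that $\Psi$ is nondecreasing together with the convexity of $\Phi$. For fixed $\eps>0$ choose $B_\eps$ as above; the function $x\mapsto\max\{\Psi(B_\eps),\,\eps\Phi(x)\}$ is a maximum of two convex functions, hence convex, and by the estimate above it majorizes $\Psi$ on all of $[0,\infty)$. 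Since $\omega_0$ is the smallest convex majorant, $\omega_0(x)\le\max\{\Psi(B_\eps),\eps\Phi(x)\}=\eps\Phi(x)$ for all large $x$, so $\limsup_{x\to\infty}\omega_0(x)/\Phi(x)\le\eps$; letting $\eps\to0$ gives $\omega_0=o(\Phi)$. This max-with-a-constant comparison is where the main difficulty lies, and it is exactly where the reduction to a nondecreasing $\Psi$ pays off: without monotonicity the convex majorant could overshoot $\Phi$.

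Finally I would upgrade $\omega_0$ to a genuine Young ($N$-)function, since $\omega_0$ might fail superlinearity. Writing $\Phi(x)=\int_0^x p(t)\,dt$ with $p=\Phi'_+$ nondecreasing, $p(0)=0$ and $p(t)\to\infty$, I set $R(x):=\int_0^x\sqrt{p(t)}\,dt$; then $R$ is convex with $R'(x)=\sqrt{p(x)}\to\infty$, so $R(x)/x\to\infty$, while $\sqrt{p}=o(p)$ yields $R=o(\Phi)$ by a routine Cesàro averaging. I would then define
\[
\omega(x):=\max\{\omega_0(x),\,R(x)\},
\]
extended evenly. As a maximum of two convex functions $\omega$ is convex and continuous; $\omega(0)=0$ and $\omega(x)/x\to0$ as $x\to0$ (both hold for $\omega_0$ near $0$, where $\Psi\equiv0$, and for $R$ since $p(0)=0$); $\omega(x)/x\to\infty$ because $R$ already does; and $\omega=o(\Phi)$ as the maximum of two functions that are $o(\Phi)$. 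Moreover $\omega\ge\omega_0\ge\varphi$ on $[c,\infty)$. Thus $\omega$ is a Young function generating an Orlicz space $L_{\omega}(T^2)$ with $\omega(x)=o(\Phi(x))$ and $\omega(x)\ge\varphi(x)$ for $x\ge c$, which is the assertion of the lemma.
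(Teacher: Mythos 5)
Your reduction to a nondecreasing majorant $\Psi$, the verification that $\Psi=o(\Phi)$, and the final upgrade $\omega=\max\{\omega_0,R\}$ are all sound, but the central step has a genuine gap: the ``least convex majorant'' $\omega_0$ of $\Psi$ need not exist, and your justification (``it exists and is finite since $\Phi$ already majorizes $\Psi$'') only shows that the family of convex majorants is nonempty, not that it has a least element. The pointwise infimum of a family of convex functions is in general not convex. Concretely, take $\varphi\equiv 1$ and $\Phi(u)=u^2$, so that $c=1$ and $\Psi=1_{[1,\infty)}$; this $\Psi$ is nondecreasing, so your monotonization does not rule it out. For each $a<1$ the function $f_a(x)=\max\{0,(x-a)/(1-a)\}$ is a convex majorant of $\Psi$ vanishing on $[0,a]$, and the constant function $1$ is another; hence the pointwise infimum of all convex majorants equals $1_{[1,\infty)}=\Psi$ itself, which is not convex (one can also check that no convex majorant lies below $\min\{1,f_a\}$, so the family is not even downward directed). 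If a least convex majorant $m$ existed it would satisfy $\Psi\le m\le\inf=\Psi$, forcing $m=\Psi$, a contradiction. Since your key inequality $\omega_0(x)\le\max\{\Psi(B_\varepsilon),\varepsilon\Phi(x)\}$ --- the one that delivers $\omega_0=o(\Phi)$ --- is derived precisely from leastness, and since the convexity of $\omega=\max\{\omega_0,R\}$ requires the convexity of $\omega_0$, the gap sits at the heart of the lemma, not at its periphery.

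The missing content is an actual construction of a convex function squeezed between $\Psi$ and $o(\Phi)$; it cannot be delegated to an envelope that may fail to exist. (The greatest convex minorant, which always exists, does not help, since it may drop below $\Psi$.) A workable route: choose $B_1<B_2<\cdots\to\infty$ with $\Psi(x)\le 2^{-k}\Phi(x)$ for $x\ge B_k$, and build $\omega$ by hand, e.g.\ piecewise linearly on the intervals $[B_k,B_{k+1}]$, selecting the $B_k$ inductively so fast that the nodal values keep the slopes nondecreasing; your $\max$-with-$R$ device can then be retained verbatim to secure the Young conditions at $0$ and at $\infty$. Note finally that the paper itself contains no proof of this lemma --- it is quoted from \cite{GGT2} --- so there is no in-paper argument to compare against; judged on its own, your proposal is incomplete at the step described above.
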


The proof of Lemma \ref{GGT2} can be found in \cite{GGT2}.

\begin{lemma}
\label{main} Let%
\begin{equation*}
\alpha _{mn}:=\frac{\arccos \left( 1/4\right) +2\pi m}{2^{2n}+1/2},\beta
_{mn}:=\frac{\pi /2+2\pi m}{2^{2n}+1/2},n,m=0,1,....
\end{equation*}%
Then there exists a posotive integer $n_{0}$, such that
\begin{equation*}
F_{2^{2n}}\left( x,y\right) \geq \frac{c}{xy},n\geq n_{0},
\end{equation*}%
where
\begin{equation*}
\left( x,y\right) \in I_{n}:=\bigcup\limits_{l,m=1}^{2^{n-3}}\left\{ \left(
x,y\right) :\alpha _{mn}\leq x\leq \beta _{mn},\alpha _{ln}\leq x\leq \beta
_{ln}\right\} .
\end{equation*}
\end{lemma}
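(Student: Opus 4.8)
The plan is to reduce the two–dimensional kernel to one–dimensional cosine sums and then exploit the sign pattern that is built into the endpoints $\arccos(1/4)$ and $\pi/2$. Writing $N:=2^{2n}$ and using $D_{k}(t)=\frac{\sin((k+1/2)t)}{\sin(t/2)}$ together with $\sin A\sin B=\tfrac12[\cos(A-B)-\cos(A+B)]$, I would first record
\[
F_{N}(x,y)=\frac{1}{2\sin(x/2)\sin(y/2)}\,\frac{1}{l_{N}}\sum_{k=0}^{N-1}\frac{\cos((k+\tfrac12)(x-y))-\cos((k+\tfrac12)(x+y))}{N-k}.
\]
Substituting $j=N-k$ and expanding $\cos((N-j+\tfrac12)w)=\cos((N+\tfrac12)w)\cos(jw)+\sin((N+\tfrac12)w)\sin(jw)$ turns each inner sum into
\[
\Sigma(w):=\cos((N+\tfrac12)w)\sum_{j=1}^{N}\frac{\cos(jw)}{j}+\sin((N+\tfrac12)w)\sum_{j=1}^{N}\frac{\sin(jw)}{j},
\]
so that $F_{N}(x,y)=\frac{1}{2\sin(x/2)\sin(y/2)}\cdot\frac{1}{l_{N}}\big[\Sigma(x-y)-\Sigma(x+y)\big]$.

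Next I would extract the sign information from the defining intervals. For $(x,y)\in I_{n}$ one has $(N+\tfrac12)x\equiv\theta_{x}$ and $(N+\tfrac12)y\equiv\theta_{y}\pmod{2\pi}$ with $\theta_{x},\theta_{y}\in[\arccos(1/4),\pi/2]$. Hence $(N+\tfrac12)(x+y)\equiv\theta_{x}+\theta_{y}\in[2\arccos(1/4),\pi]$, which gives $\cos((N+\tfrac12)(x+y))\le 2(1/4)^{2}-1=-\tfrac78$, while $(N+\tfrac12)(x-y)\equiv\theta_{x}-\theta_{y}$ lies in $[-(\pi/2-\arccos(1/4)),\,\pi/2-\arccos(1/4)]$, so $\cos((N+\tfrac12)(x-y))\ge\sin(\arccos(1/4))=\sqrt{15}/4>0$. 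This is exactly the purpose of the two endpoints: the carrier cosine is negative and bounded away from $0$ on the $x+y$ term and positive and bounded away from $0$ on the $x-y$ term, so in $\Sigma(x-y)-\Sigma(x+y)$ the two leading contributions add with the same (positive) sign instead of cancelling.

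I would then invoke the classical estimates $\big|\sum_{j=1}^{N}\frac{\sin(jw)}{j}\big|\le C$ uniformly and $\sum_{j=1}^{N}\frac{\cos(jw)}{j}=\log\frac1{|w|}+O(1)$ for $1/N\le|w|\le\pi$, noting also that $\sum_{j=1}^{N}\frac{\cos(jw)}{j}\ge\cos(1)\,l_{N}>0$ when $|w|<1/N$ (this covers $x-y$ when $x,y$ lie in the same interval). Since $x,y\le\pi 2^{-n-1}$ we get $x+y\le\pi 2^{-n}$ and $x+y\ge 2\alpha_{1n}>1/N$, whence $\log\frac1{x+y}\ge n\log2-\log\pi$. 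Combining with the sign facts yields $\Sigma(x-y)\ge -C$ and $-\Sigma(x+y)\ge\tfrac78\log\frac1{x+y}-C'$, so that
\[
\frac{1}{l_{N}}\big[\Sigma(x-y)-\Sigma(x+y)\big]\ge\frac{\tfrac78\log\frac1{x+y}-C''}{l_{N}}.
\]
Because $l_{N}=l_{2^{2n}}\le 1+2n\log2$ is of the exact same order as $\log\frac1{x+y}$, the right–hand side exceeds a positive absolute constant $c_{3}$ once $n\ge n_{0}$. Finally, $\sin(t/2)\le t/2$ gives $\frac{1}{2\sin(x/2)\sin(y/2)}\ge\frac{2}{xy}$, and therefore $F_{N}(x,y)\ge\frac{2c_{3}}{xy}$, which is the asserted bound with $c=2c_{3}$.

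The main obstacle is the magnitude comparison in the third paragraph: one must confirm that the genuinely large negative–carrier term from $x+y$ is comparable to $l_{N}$ rather than merely bounded, so that after division by $l_{N}$ a positive constant survives. This rests on the two–sided control $\log\frac1{x+y}\asymp n\asymp l_{N}$ holding uniformly over $I_{n}$, and on checking that the $O(1)$ errors in the sine/cosine sum estimates are uniform in $w=x\pm y$ across the entire index range $m,l\le 2^{n-3}$; taking $n_{0}$ large absorbs all these errors. The sign bookkeeping governed by $\arccos(1/4)$ and $\pi/2$ is the conceptual heart, but it becomes elementary once the reduction to $\Sigma(w)$ is in place.
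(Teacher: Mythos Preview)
Your argument is correct and rests on the same underlying mechanism as the paper's proof, but the packaging is different enough to merit a word of comparison. The paper expands $\sin((N+\tfrac12-k)x)\sin((N+\tfrac12-k)y)$ by first separating the $x$ and $y$ variables into four $\sin/\cos((N+\tfrac12)\,\cdot\,)$ prefactors and then passing to sums $\sum_k\frac{\cos k(x\pm y)}{k}$, $\sum_k\frac{\sin k(x\pm y)}{k}$; for the cosine sums it invokes the Fej\'er--type identity
\[
\sum_{k=1}^{N}\frac{\cos ku}{k}=\sum_{k=1}^{N-2}\frac{2}{k(k+1)(k+2)}\,\frac{\sin^2((k+1)u/2)}{2\sin^2(u/2)}+O(1),
\]
which makes positivity manifest and, after bookkeeping fifteen terms, yields $l_{N}F_{N}\ge cn/(xy)$. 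The sign input is $\sin((N+\tfrac12)x)>\tfrac12$ and $|\cos((N+\tfrac12)x)|\le\tfrac14$ on $I_n$, applied to each variable separately. You instead pass immediately to the single--variable reductions $\Sigma(x-y)-\Sigma(x+y)$, read off the combined sign information $\cos((N+\tfrac12)(x+y))\le-7/8$ and $\cos((N+\tfrac12)(x-y))\ge\sqrt{15}/4$ (which is exactly the consequence of the paper's two separate inequalities), and replace the Fej\'er identity by the classical asymptotic $\sum_{j\le N}\frac{\cos jw}{j}=\log(1/|w|)+O(1)$ on $1/N\le|w|\le\pi$ together with the trivial lower bound for $|w|<1/N$. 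Both routes hinge on the same two facts --- the sine sums are uniformly bounded while the cosine sums are of size $\log(1/|w|)\asymp n\asymp l_{N}$ --- so the final division by $l_N$ leaves a positive constant in either case. Your version tracks two objects instead of fifteen and avoids the auxiliary Fej\'er decomposition; the paper's version has the virtue that positivity of the main terms is visible term--by--term rather than emerging from a size comparison.
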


\begin{proof}[Proof of Lemma \protect\ref{main}]
We can write%
\begin{equation}
l_{2^{2n}}F_{2^{2n}}\left( x,y\right) =\sum\limits_{k=1}^{2^{2n}}\frac{%
D_{2^{2n}-k}\left( x\right) D_{2^{2n}-k}\left( y\right) }{k}  \label{eq}
\end{equation}%
\begin{equation*}
=\sum\limits_{k=1}^{2^{2n}}\frac{1}{k}\frac{\sin \left( 2^{2n}+1/2-k\right) x%
}{2\sin \frac{x}{2}}\frac{\sin \left( 2^{2n}+1/2-k\right) y}{2\sin \frac{y}{2%
}}
\end{equation*}%
\begin{equation*}
=\frac{\sin \left( 2^{2n}+1/2\right) x}{2\sin \frac{x}{2}}\frac{\sin \left(
2^{2n}+1/2\right) y}{2\sin \frac{y}{2}}\sum\limits_{k=1}^{2^{2n}}\frac{\cos
kx\cos ky}{k}
\end{equation*}%
\begin{equation*}
+\frac{\cos \left( 2^{2n}+1/2\right) x}{2\sin \frac{x}{2}}\frac{\cos \left(
2^{2n}+1/2\right) y}{2\sin \frac{y}{2}}\sum\limits_{k=1}^{2^{2n}}\frac{\sin
kx\sin ky}{k}
\end{equation*}%
\begin{equation*}
-\frac{\sin \left( 2^{2n}+1/2\right) x}{2\sin \frac{x}{2}}\frac{\cos \left(
2^{2n}+1/2\right) y}{2\sin \frac{y}{2}}\sum\limits_{k=1}^{2^{2n}}\frac{\cos
kx\sin ky}{k}
\end{equation*}%
\begin{equation*}
-\frac{\cos \left( 2^{2n}+1/2\right) x}{2\sin \frac{x}{2}}\frac{\sin \left(
2^{2n}+1/2\right) y}{2\sin \frac{y}{2}}\sum\limits_{k=1}^{2^{2n}}\frac{\sin
kx\cos ky}{k}
\end{equation*}%
\begin{equation*}
=\frac{\sin \left( 2^{2n}+1/2\right) x}{2\sin \frac{x}{2}}\frac{\sin \left(
2^{2n}+1/2\right) y}{2\sin \frac{y}{2}}
\end{equation*}%
\begin{equation*}
\times \frac{1}{2}\left\{ \sum\limits_{k=1}^{2^{2n}}\frac{\cos k\left(
x+y\right) }{k}+\sum\limits_{k=1}^{2^{2n}}\frac{\cos k\left( x-y\right) }{k}%
\right\}
\end{equation*}%
\begin{equation*}
+\frac{\cos \left( 2^{2n}+1/2\right) x}{2\sin \frac{x}{2}}\frac{\cos \left(
2^{2n}+1/2\right) y}{2\sin \frac{y}{2}}
\end{equation*}%
\begin{equation*}
\times \frac{1}{2}\left\{ \sum\limits_{k=1}^{2^{2n}}\frac{\cos k\left(
x-y\right) }{k}-\sum\limits_{k=1}^{2^{2n}}\frac{\cos k\left( x+y\right) }{k}%
\right\}
\end{equation*}%
\begin{equation*}
-\frac{\sin \left( 2^{2n}+1/2\right) x}{2\sin \frac{x}{2}}\frac{\cos \left(
2^{2n}+1/2\right) y}{2\sin \frac{y}{2}}
\end{equation*}%
\begin{equation*}
\times \frac{1}{2}\left\{ \sum\limits_{k=1}^{2^{2n}}\frac{\sin k\left(
x+y\right) }{k}-\sum\limits_{k=1}^{2^{2n}}\frac{\sin k\left( x-y\right) }{k}%
\right\}
\end{equation*}%
\begin{equation*}
-\frac{\cos \left( 2^{2n}+1/2\right) x}{2\sin \frac{x}{2}}\frac{\sin \left(
2^{2n}+1/2\right) y}{2\sin \frac{y}{2}}
\end{equation*}%
\begin{equation*}
\times \frac{1}{2}\left\{ \sum\limits_{k=1}^{2^{2n}}\frac{\sin k\left(
x+y\right) }{k}+\sum\limits_{k=1}^{2^{2n}}\frac{\sin k\left( x-y\right) }{k}%
\right\} .
\end{equation*}

Since%
\begin{eqnarray*}
&&\sum\limits_{k=1}^{2^{2n}}\frac{\cos ku}{k} \\
&=&\sum\limits_{k=1}^{2^{2n}-2}\frac{2}{k\left( k+1\right) \left( k+2\right)
}\frac{\sin ^{2}\left( \left( k+1\right) \frac{u}{2}\right) }{2\sin ^{2}%
\frac{u}{2}} \\
&&-\frac{1}{2^{2n}\left( 2^{2n}-1\right) }\frac{\sin ^{2}\left( 2^{2n}\frac{u%
}{2}\right) }{2\sin ^{2}\frac{u}{2}}+\frac{1}{2^{2n}}\frac{\sin \left(
2^{2n}+1/2\right) u}{2\sin \frac{u}{2}}-\frac{3}{4}
\end{eqnarray*}%
from (\ref{eq}) \ we have%
\begin{equation}
l_{2^{2n}}F_{2^{2n}}\left( x,y\right) =\frac{\sin \left( 2^{2n}+1/2\right) x%
}{2\sin \frac{x}{2}}\frac{\sin \left( 2^{2n}+1/2\right) y}{2\sin \frac{y}{2}}%
\frac{1}{2}  \label{R1-R15}
\end{equation}%
\begin{equation*}
\times \sum\limits_{k=1}^{2^{2n}-2}\frac{2}{k\left( k+1\right) \left(
k+2\right) }\frac{\sin ^{2}\left( \left( k+1\right) \frac{x+y}{2}\right) }{%
2\sin ^{2}\frac{x+y}{2}}
\end{equation*}%
\begin{equation*}
+\frac{\sin \left( 2^{2n}+1/2\right) x}{2\sin \frac{x}{2}}\frac{\sin \left(
2^{2n}+1/2\right) y}{2\sin \frac{y}{2}}\frac{1}{2}
\end{equation*}%
\begin{equation*}
\times \sum\limits_{k=1}^{2^{2n}-2}\frac{2}{k\left( k+1\right) \left(
k+2\right) }\frac{\sin ^{2}\left( \left( k+1\right) \frac{x-y}{2}\right) }{%
2\sin ^{2}\frac{x-y}{2}}
\end{equation*}%
\begin{equation*}
+\frac{\cos \left( 2^{2n}+1/2\right) x}{2\sin \frac{x}{2}}\frac{\cos \left(
2^{2n}+1/2\right) y}{2\sin \frac{y}{2}}\frac{1}{2}
\end{equation*}%
\begin{equation*}
\times \sum\limits_{k=1}^{2^{2n}-2}\frac{2}{k\left( k+1\right) \left(
k+2\right) }\frac{\sin ^{2}\left( \left( k+1\right) \frac{x-y}{2}\right) }{%
2\sin ^{2}\frac{x-y}{2}}
\end{equation*}%
\begin{equation*}
-\frac{\cos \left( 2^{2n}+1/2\right) x}{2\sin \frac{x}{2}}\frac{\cos \left(
2^{2n}+1/2\right) y}{2\sin \frac{y}{2}}\frac{1}{2}
\end{equation*}%
\begin{equation*}
\times \sum\limits_{k=1}^{2^{2n}-2}\frac{2}{k\left( k+1\right) \left(
k+2\right) }\frac{\sin ^{2}\left( \left( k+1\right) \frac{x+y}{2}\right) }{%
2\sin ^{2}\frac{x+y}{2}}
\end{equation*}%
\begin{equation*}
-\frac{\sin \left( 2^{2n}+1/2\right) x}{2\sin \frac{x}{2}}\frac{\sin \left(
2^{2n}+1/2\right) y}{2\sin \frac{y}{2}}\frac{1}{2}
\end{equation*}%
\begin{equation*}
\times \frac{1}{2^{2n}\left( 2^{2n}-1\right) }\frac{\sin ^{2}\left( 2^{2n}%
\frac{x+y}{2}\right) }{2\sin ^{2}\frac{x+y}{2}}
\end{equation*}

\begin{equation*}
+\frac{\sin \left( 2^{2n}+1/2\right) x}{2\sin \frac{x}{2}}\frac{\sin \left(
2^{2n}+1/2\right) y}{2\sin \frac{y}{2}}\frac{1}{2}\frac{1}{2^{2n}}\frac{\sin
\left( \left( 2^{2n}+1/2\right) \left( x+y\right) \right) }{2\sin \frac{x+y}{%
2}}
\end{equation*}%
\begin{equation*}
-\frac{\sin \left( 2^{2n}+1/2\right) x}{2\sin \frac{x}{2}}\frac{\sin \left(
2^{2n}+1/2\right) y}{2\sin \frac{y}{2}}\frac{3}{2}
\end{equation*}%
\begin{equation*}
-\frac{\sin \left( 2^{2n}+1/2\right) x}{2\sin \frac{x}{2}}\frac{\sin \left(
2^{2n}+1/2\right) y}{2\sin \frac{y}{2}}\frac{1}{2^{2n+1}\left(
2^{2n}-1\right) }\frac{\sin ^{2}\left( 2^{2n}\frac{x-y}{2}\right) }{2\sin
^{2}\frac{x-y}{2}}
\end{equation*}%
\begin{equation*}
+\frac{\sin \left( 2^{2n}+1/2\right) x}{2\sin \frac{x}{2}}\frac{\sin \left(
2^{2n}+1/2\right) y}{2\sin \frac{y}{2}}\frac{1}{2^{2n+1}}\frac{\sin \left(
\left( 2^{2n}+1/2\right) \left( x-y\right) \right) }{2\sin \frac{x-y}{2}}
\end{equation*}%
\begin{equation*}
-\frac{\cos \left( 2^{2n}+1/2\right) x}{2\sin \frac{x}{2}}\frac{\cos \left(
2^{2n}+1/2\right) y}{2\sin \frac{y}{2}}\frac{1}{2^{2n+1}\left(
2^{2n}-1\right) }\frac{\sin ^{2}\left( 2^{2n}\frac{x-y}{2}\right) }{2\sin
^{2}\frac{x-y}{2}}
\end{equation*}%
\begin{equation*}
+\frac{\cos \left( 2^{2n}+1/2\right) x}{2\sin \frac{x}{2}}\frac{\cos \left(
2^{2n}+1/2\right) y}{2\sin \frac{y}{2}}\frac{1}{2^{2n+1}}\frac{\sin \left(
\left( 2^{2n}+1/2\right) \left( x-y\right) \right) }{2\sin \frac{x-y}{2}}
\end{equation*}%
\begin{equation*}
-\frac{\cos \left( 2^{2n}+1/2\right) x}{2\sin \frac{x}{2}}\frac{\cos \left(
2^{2n}+1/2\right) y}{2\sin \frac{y}{2}}\frac{1}{2^{2n+1}\left(
2^{2n}-1\right) }\frac{\sin ^{2}\left( 2^{2n}\frac{x+y}{2}\right) }{2\sin
^{2}\frac{x+y}{2}}
\end{equation*}%
\begin{equation*}
+\frac{\cos \left( 2^{2n}+1/2\right) x}{2\sin \frac{x}{2}}\frac{\cos \left(
2^{2n}+1/2\right) y}{2\sin \frac{y}{2}}\frac{1}{2^{2n+1}}\frac{\sin \left(
\left( 2^{2n}+1/2\right) \left( x+y\right) \right) }{2\sin \frac{x+y}{2}}
\end{equation*}%
\begin{equation*}
-\frac{\sin \left( 2^{2n}+1/2\right) x}{2\sin \frac{x}{2}}\frac{\cos \left(
2^{2n}+1/2\right) y}{2\sin \frac{y}{2}}
\end{equation*}%
\begin{equation*}
\times \frac{1}{2}\left\{ \sum\limits_{k=1}^{2^{2n}}\frac{\sin k\left(
x+y\right) }{k}-\sum\limits_{k=1}^{2^{2n}}\frac{\sin k\left( x-y\right) }{k}%
\right\}
\end{equation*}%
\begin{equation*}
-\frac{\cos \left( 2^{2n}+1/2\right) x}{2\sin \frac{x}{2}}\frac{\sin \left(
2^{2n}+1/2\right) y}{2\sin \frac{y}{2}}
\end{equation*}%
\begin{equation*}
\times \frac{1}{2}\left\{ \sum\limits_{k=1}^{2^{2n}}\frac{\sin k\left(
x+y\right) }{k}+\sum\limits_{k=1}^{2^{2n}}\frac{\sin k\left( x-y\right) }{k}%
\right\}
\end{equation*}%
\begin{equation*}
:=\sum\limits_{j=1}^{15}R_{j}.
\end{equation*}

Since%
\begin{equation*}
\left\vert \sum\limits_{k=1}^{2^{2n}}\frac{\sin kx}{k}\right\vert \leq
c<\infty ,n=1,2,..
\end{equation*}%
and%
\begin{equation}
\left\vert \sin x\right\vert \geq \frac{2}{\pi }\left\vert x\right\vert ,-%
\frac{\pi }{2}\leq x\leq \frac{\pi }{2}  \label{low}
\end{equation}%
we obtain%
\begin{equation}
\sum\limits_{j=5}^{15}\left\vert R_{j}\right\vert =O\left( \frac{1}{xy}%
\right) .  \label{R5-R15}
\end{equation}

Let $\left( x,y\right) \in I_{n}$. Then it is easy to show that%
\begin{equation*}
\sin \left( 2^{2n}+1/2\right) x>\frac{1}{2}
\end{equation*}%
and%
\begin{equation*}
\cos \left( 2^{2n}+1/2\right) x\leq \frac{1}{4}.
\end{equation*}%
Consequently, from (\ref{low}) we obtain%
\begin{eqnarray*}
&&R_{1}+R_{2}+R_{3}+R_{4} \\
&\geq &R_{1}+R_{2}-R_{3}+R_{4} \\
&\geq &\frac{1}{4}\frac{1}{xy}\sum\limits_{k=1}^{2^{2n}-2}\frac{1}{k\left(
k+1\right) \left( k+2\right) }\frac{\sin ^{2}\left( \left( k+1\right) \frac{%
x+y}{2}\right) }{2\sin ^{2}\frac{x+y}{2}} \\
&&+\frac{1}{4}\frac{1}{xy}\sum\limits_{k=1}^{2^{2n}-2}\frac{1}{k\left(
k+1\right) \left( k+2\right) }\frac{\sin ^{2}\left( \left( k+1\right) \frac{%
x-y}{2}\right) }{2\sin ^{2}\frac{x-y}{2}} \\
&&-\frac{1}{16}\left( \frac{\pi }{2}\right) ^{2}\frac{1}{xy}%
\sum\limits_{k=1}^{2^{2n}-2}\frac{1}{k\left( k+1\right) \left( k+2\right) }%
\frac{\sin ^{2}\left( \left( k+1\right) \frac{x+y}{2}\right) }{2\sin ^{2}%
\frac{x+y}{2}} \\
&&-\frac{1}{16}\left( \frac{\pi }{2}\right) ^{2}\frac{1}{xy}%
\sum\limits_{k=1}^{2^{2n}-2}\frac{1}{k\left( k+1\right) \left( k+2\right) }%
\frac{\sin ^{2}\left( \left( k+1\right) \frac{x-y}{2}\right) }{2\sin ^{2}%
\frac{x-y}{2}} \\
&\geq &\frac{c}{xy}\sum\limits_{k=1}^{2^{n-1}-1}\frac{1}{k\left( k+1\right)
\left( k+2\right) }\frac{\sin ^{2}\left( \left( k+1\right) \frac{x+y}{2}%
\right) }{2\sin ^{2}\frac{x+y}{2}} \\
&&+\frac{c}{xy}\sum\limits_{k=1}^{2^{n-1}-1}\frac{1}{k\left( k+1\right)
\left( k+2\right) }\frac{\sin ^{2}\left( \left( k+1\right) \frac{x-y}{2}%
\right) }{2\sin ^{2}\frac{x-y}{2}} \\
&\geq &\frac{c}{xy}\sum\limits_{k=1}^{2^{n-1}-1}\frac{1}{k}\geq \frac{cn}{xy}
\end{eqnarray*}%
which complete the proof of Lemma \ref{main}.
\end{proof}

\begin{corollary}
\label{pointwise}Let $n\geq n_{0}$ and $0\leq s,t\leq \gamma _{n}:=\frac{\pi
/2-\arccos \left( 1/4\right) }{4\left( 2^{2n}+1/2\right) }$. Then%
\begin{equation*}
F_{2^{2n}}\left( x-s,y-t\right) \geq \frac{c}{xy},
\end{equation*}%
for%
\begin{eqnarray*}
\left( x,y\right)  &\in &J_{n} \\
&:&=\bigcup\limits_{l,m=1}^{2^{n-3}}\left\{ \left( x,y\right) :\alpha
_{mn}+\gamma _{n}\leq x\leq \beta _{mn}-\gamma _{n},\alpha _{ln}+\gamma
_{n}\leq x\leq \beta _{ln}-\gamma _{n}\right\} .
\end{eqnarray*}
\end{corollary}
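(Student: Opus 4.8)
The plan is to deduce the corollary directly from Lemma \ref{main} by a translation (containment) argument, followed by the elementary monotonicity of $1/(xy)$. No new estimates on the kernel are needed: the statement is essentially a reindexing of the lemma adapted to a small shift of the argument.

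First I would verify the geometric containment: if $(x,y)\in J_{n}$ and $0\leq s,t\leq \gamma_{n}$, then $(x-s,y-t)\in I_{n}$. Suppose $(x,y)$ lies in the $(l,m)$-th rectangle defining $J_{n}$, so that $\alpha_{mn}+\gamma_{n}\leq x\leq \beta_{mn}-\gamma_{n}$ and (reading the second coordinate) $\alpha_{ln}+\gamma_{n}\leq y\leq \beta_{ln}-\gamma_{n}$. Since $0\leq s\leq \gamma_{n}$, I get
\[
\alpha_{mn}=(\alpha_{mn}+\gamma_{n})-\gamma_{n}\leq x-s\leq x\leq \beta_{mn}-\gamma_{n}\leq \beta_{mn},
\]
and in the same way $\alpha_{ln}\leq y-t\leq \beta_{ln}$. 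Hence $(x-s,y-t)$ belongs to the corresponding rectangle of $I_{n}$, so $(x-s,y-t)\in I_{n}$.

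Next, applying Lemma \ref{main} at the point $(x-s,y-t)\in I_{n}$, which is legitimate because $n\geq n_{0}$, I obtain
\[
F_{2^{2n}}(x-s,y-t)\geq \frac{c}{(x-s)(y-t)}.
\]
Finally I would note that $x-s>0$ and $y-t>0$: indeed $x\geq \alpha_{mn}+\gamma_{n}>\gamma_{n}\geq s$ and similarly $y>t$. Therefore $0<x-s\leq x$ and $0<y-t\leq y$, which gives $(x-s)(y-t)\leq xy$ and hence $\frac{1}{(x-s)(y-t)}\geq \frac{1}{xy}$. Combining this with the previous inequality yields $F_{2^{2n}}(x-s,y-t)\geq \frac{c}{xy}$, as claimed.

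The only point carrying any content is the choice of $\gamma_{n}$ as exactly one quarter of the common side length $\beta_{mn}-\alpha_{mn}=\frac{\pi/2-\arccos(1/4)}{2^{2n}+1/2}$ of the rectangles. This is precisely what guarantees, on the one hand, that $J_{n}$ is nonempty (one needs $2\gamma_{n}\leq \beta_{mn}-\alpha_{mn}$, i.e. $\tfrac12\leq 1$), and, on the other, that every $\gamma_{n}$-translate of a point of $J_{n}$ remains inside $I_{n}$; this is the whole role of the margin $\gamma_{n}$. I do not expect any genuine obstacle here—the argument is purely geometric, and the hardest thing is simply to keep the bookkeeping of the two coordinates straight.
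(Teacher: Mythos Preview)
Your argument is correct and is exactly the intended one: the paper states the corollary without proof, treating it as an immediate consequence of Lemma~\ref{main} via the obvious translation/containment observation, which is precisely what you spell out.
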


\section{Proof of the Theorems}

\begin{proof}[Proof of Theorem \protect\ref{normdivergence}]
a) Let $Q\left( 2^{4n}\right) \geq c2^{4n}$ for $n>n_{0}$. By virtue of
estimate (\cite{K-R}, Ch2)%
\begin{equation*}
\left\Vert f\right\Vert _{L_{Q}\left( T^{2}\right) }\leq c\left(
1+\left\Vert Q\left( \left\vert f\right\vert \right) \right\Vert
_{L_{1}\left( T^{2}\right) }\right)
\end{equation*}%
we get%
\begin{eqnarray}
&&\left\Vert t_{2^{2n}}\left( \left( \frac{\pi /2-\arccos \left( 1/4\right)
}{8}\right) ^{2}\frac{1_{\left[ 0,\gamma _{n}\right] ^{2}}}{\gamma _{n}^{2}}%
\right) \right\Vert _{L_{1}\left( T^{2}\right) }  \label{a} \\
&\leq &\left\Vert t_{2^{2n}}\right\Vert _{L_{Q}\left( T^{2}\right)
\rightarrow L_{1}\left( T^{2}\right) }\left\Vert \left( \frac{\pi /2-\arccos
\left( 1/4\right) }{8}\right) ^{2}\frac{1_{\left[ 0,\gamma _{n}\right] ^{2}}%
}{\gamma _{n}^{2}}\right\Vert _{L_{Q}\left( T^{2}\right) }  \notag \\
&\leq &c\left\Vert t_{2^{2n}}\right\Vert _{L_{Q}\left( T^{2}\right)
\rightarrow L_{1}\left( T^{2}\right) }\left( 1+\left\Vert Q\left( \left(
\frac{\pi /2-\arccos \left( 1/4\right) }{8}\right) ^{2}\frac{1_{\left[
0,\gamma _{n}\right] ^{2}}}{\gamma _{n}^{2}}\right) \right\Vert
_{L_{1}\left( T^{2}\right) }\right)   \notag \\
&\leq &c\left\Vert t_{2^{2n}}\right\Vert _{L_{Q}\left( T^{2}\right)
\rightarrow L_{1}\left( T^{2}\right) }\left( 1+\gamma _{n}^{2}Q\left( \left(
\frac{\pi /2-\arccos \left( 1/4\right) }{8}\right) ^{2}\frac{1}{\gamma
_{n}^{2}}\right) \right)   \notag \\
&\leq &c\left\Vert t_{2^{2n}}\right\Vert _{L_{Q}\left( T^{2}\right)
\rightarrow L_{1}\left( T^{2}\right) }\frac{Q\left( 2^{4n}\right) }{2^{4n}}.
\notag
\end{eqnarray}%
On the other hand, from Corollary \ref{pointwise} we obtain%
\begin{eqnarray*}
&&t_{2^{2n}}\left( \left( \frac{\pi /2-\arccos \left( 1/4\right) }{8}\right)
^{2}\frac{1_{\left[ 0,\gamma _{n}\right] ^{2}}}{\gamma _{n}^{2}};x,y\right)
\\
&=&\left( \frac{\pi /2-\arccos \left( 1/4\right) }{8}\right) ^{2}\frac{1}{%
\gamma _{n}^{2}}\int\limits_{0}^{\gamma _{n}}\int\limits_{0}^{\gamma
_{n}}F_{2^{2n}}\left( x-s,y-t\right) dsdt \\
&\geq &\frac{c}{xy},\text{ \ \ }\left( x,y\right) \in J_{n}.
\end{eqnarray*}%
Consequently,%
\begin{eqnarray}
&&\left\Vert t_{2^{2n}}\left( \left( \frac{\pi /2-\arccos \left( 1/4\right)
}{8}\right) ^{2}\frac{1_{\left[ 0,\gamma _{n}\right] ^{2}}}{\gamma _{n}^{2}}%
\right) \right\Vert _{L_{1}\left( T^{2}\right) }  \label{b} \\
&\geq &c\iint\limits_{J_{n}}\frac{1}{xy}dxdy  \notag \\
&\geq &c\sum\limits_{l,m=1}^{2^{n-3}}\log \left( 1+\frac{c}{m}\right) \log
\left( 1+\frac{c}{l}\right)   \notag \\
&\geq &cn^{2}.  \notag
\end{eqnarray}

The fact that $L_{Q}\left( T^{2}\right) \nsubseteq L\log ^{2}L\left(
T^{2}\right) $ is equivalet to the condition%
\begin{equation*}
\overline{\lim\limits_{u\rightarrow \infty }}\frac{u\log ^{2}u}{Q\left(
u\right) }=+\infty .
\end{equation*}%
Thus there exists $\{u_{k}:k\geq 1\}$ such that
\begin{equation*}
\lim\limits_{k\rightarrow \infty }\frac{u_{k}\log ^{2}u_{k}}{Q\left(
u_{k}\right) }=+\infty ,~u_{k+1}>u_{k},k=1,2,...\
\end{equation*}%
and a monotonically increasing sequence of positive integers $\left\{
r_{k}:k\geq 1\right\} $ such that%
\begin{equation*}
2^{4r_{k}}\leq u_{k}<2^{4\left( r_{k}+1\right) }.
\end{equation*}%
Then we have%
\begin{equation*}
\frac{2^{4r_{k}}r_{k}^{2}}{Q\left( 2^{4r_{k}}\right) }\geq c\frac{u_{k}\log
^{2}u_{k}}{Q\left( u_{k}\right) }\rightarrow \infty \text{ \ \ as \ \ }%
k\rightarrow \infty ,
\end{equation*}%
thus from (\ref{a}) and (\ref{b}) we obtain that%
\begin{equation*}
\sup\limits_{n}\left\Vert t_{2^{2n}}\right\Vert _{L_{Q}\left( T^{2}\right)
\rightarrow L_{1}\left( T^{2}\right) }=+\infty .
\end{equation*}

Part b) follows immediately from part a).

This completes the proof of Theorem \ref{normdivergence}.
\end{proof}

\begin{proof}[Proof of Theorem \protect\ref{measuredivergence}]
By Lemma \ref{GGT} the proof of Theorem \ref{measuredivergence} will be
complete if we show that there exists sequences of integers $\{n_{k}:k\geq
1\}$ and $\{\nu _{k}:k\geq 1\}$ increasing to infinity, and a sequence of
functions $\{\xi _{k}:k\geq 1\}$ from the unit bull $S_{Q}\left( 0,1\right) $
of Orlicz space $L_{Q}\left( T^{2}\right) $, such that for all $k$%
\begin{equation}
\mes\{\left( x,y\right) \in T^{2}:\left\vert t_{2^{2n_{k}}}\left( \xi
_{k};x,y\right) \right\vert >\nu _{k}\}\geq \frac{1}{8}.  \label{est}
\end{equation}

First, we prove that there exists $c_{1},c_{2}>0$ such that
\begin{equation}
\mes\{\left( x,y\right) \in T^{2}:\left\vert t_{2^{2n}}\left( \frac{1_{\left[
0,\gamma _{n}\right] ^{2}}}{\gamma _{n}^{2}};x,y\right) \right\vert
>c_{1}2^{3n}\}>\frac{c_{2}n}{2^{3n}}.  \label{est1}
\end{equation}

From Corollary \ref{pointwise} we can write

\begin{eqnarray}
&&\mes\left\{ (x,y)\in T^{2}:\left\vert t_{2^{2n}}\left( \frac{1_{\left[
0,\gamma _{n}\right] ^{2}}}{\gamma _{n}^{2}};x,y\right) \right\vert
>c_{1}2^{3n}\right\}  \label{est2} \\
&\geq &\mes\left\{ (x,y)\in J_{n}:\left\vert t_{2^{2n}}\left( \frac{1_{\left[
0,\gamma _{n}\right] ^{2}}}{\gamma _{n}^{2}};x,y\right) \right\vert
>c_{1}2^{3n}\right\}  \notag \\
&>&\mes\{(x,y)\in J_{n}:\frac{1}{xy}>2^{3n}\}.  \notag
\end{eqnarray}

Set%
\begin{equation*}
r_{n,m}:=\max \left\{ l:\beta _{ln}\leq \frac{1}{2^{3n}\left( \beta
_{mn}-\gamma _{n}\right) }+\gamma _{n}\right\} .
\end{equation*}%
For the simple calculation we obtain that%
\begin{equation*}
r_{n,m}\sim \frac{2^{n}}{m}.
\end{equation*}

Then from (\ref{est2}) we have%
\begin{eqnarray*}
&&\mes\left\{ (x,y)\in T^{2}:\left\vert t_{2^{2n}}\left( \frac{1_{\left[
0,\gamma _{n}\right] ^{2}}}{\gamma _{n}^{2}};x,y\right) \right\vert
>c_{1}2^{3n}\right\} \\
&\geq &c\sum\limits_{m=1}^{2^{n-12}}\sum\limits_{l=1}^{r_{n,m}}\mes\left(
J_{n}\right) \\
&\geq &\frac{c}{2^{4n}}\sum\limits_{m=1}^{2^{n-12}}r_{n,m}\geq \frac{c}{%
2^{3n}}\sum\limits_{m=1}^{2^{n-12}}\frac{1}{m}\geq \frac{c_{2}n}{2^{3n}}
\end{eqnarray*}

Hence (\ref{est1}) is proved.

From the condition of the theorem we write \cite{K-R}
\begin{equation*}
\liminf_{u\rightarrow \infty }\frac{Q(u)}{u\log u}=0.
\end{equation*}%
Consequently, there exists a sequence of integers $\{n_{k}:k\geq 1\}$
increasing to infinity, such that

\begin{equation}
\lim_{k\rightarrow \infty }Q(2^{4n_{k}})2^{-4n_{k}}n_{k}^{-1}=0,\quad \frac{%
Q(2^{4n_{k}})}{2^{4n_{k}}}\geq 4,\quad \forall k.  \label{cond1}
\end{equation}

From (\ref{est1}) we have
\begin{equation*}
\mes\{(x,y)\in T^{2}:\left\vert t_{2^{2n_{k}}}\left( \frac{1_{\left[
0,\gamma _{n_{k}}\right] ^{2}}}{\gamma _{n_{k}}^{2}};x,y\right) \right\vert
>c_{1}2^{3n_{k}}\}>\frac{c_{2}n_{k}}{2^{3n_{k}}}.
\end{equation*}%
Then by the virtue of Theorem \ref{Garsia} there exists $%
E_{1},...,E_{r},E_{1}^{\prime },...,E_{r}^{\prime }\in \mathcal{E}$ and $%
\varepsilon _{1},...,\varepsilon _{r}=\pm 1$ such that
\begin{equation*}
\mes\{(x,y)\in T^{2}:\left\vert \sum\limits_{i=1}^{r}\varepsilon
_{i}t_{2^{2n_{k}}}\left( \frac{1_{\left[ 0,\gamma _{n_{k}}\right] ^{2}}}{%
\gamma _{n_{k}}^{2}};E_{i}x,E_{i}^{\prime }y\right) \right\vert
>2^{3n_{k}}\}>\frac{1}{8},
\end{equation*}%
where $r=\left[ \frac{2^{3n_{k}}}{c_{2}m_{k}}\right] +1.$

Denote
\begin{equation*}
\nu _{k}=\frac{2^{7n_{k}-1}}{rQ\left( 2^{4n_{k}}\right) }
\end{equation*}%
and
\begin{equation*}
\xi _{k}\left( x,y\right) =\frac{2^{4n_{k}-1}}{Q\left( 2^{4n_{k}}\right) }%
M_{k}\left( x,y\right) ,
\end{equation*}%
where
\begin{equation*}
M_{k}\left( x,y\right) =\frac{1}{r}\sum\limits_{i=1}^{r}\varepsilon _{i}%
\frac{1_{\left[ 0,\gamma _{n_{k}}\right] ^{2}}\left( E_{i}x,E_{i}^{\prime
}y\right) }{\gamma _{n_{k}}^{2}}.
\end{equation*}

Thus, we obtain (\ref{est}).

Since%
\begin{equation*}
\left\Vert M_{k}\right\Vert _{\infty }\leq 2^{4n_{k}+2},
\end{equation*}%
\begin{equation*}
\left\Vert M_{k}\right\Vert _{1}\leq 1,
\end{equation*}%
\begin{equation*}
\left\vert \iint\limits_{T^{2}}fg\right\vert \leq \frac{1}{2}\left[
\iint\limits_{T^{2}}Q\left( 2\left\vert f\right\vert \right) +1\right] ,
\end{equation*}%
and%
\begin{equation*}
\frac{Q\left( u\right) }{u}<\frac{Q\left( u^{\prime }\right) }{u^{\prime }}%
,\left( 0<u<u^{\prime }\right)
\end{equation*}%
we can write

\begin{eqnarray*}
\Vert \xi _{k}\Vert _{L_{Q}(T^{2})} &\leq &\frac{1}{2}\left[
1+\iint\limits_{T^{2}}Q\left( \frac{2^{4n_{k}}\left\vert M_{k}\left(
x,y\right) \right\vert }{Q(2^{4n_{k}})}\right) dxdy\right]  \\
&\leq &\frac{1}{2}\left[ 1+\iint\limits_{T^{2}}\frac{Q\left( \frac{%
2^{4n_{k}}2^{4n_{k}+2}}{Q(2^{4m_{k}})}\right) }{\frac{2^{4n_{k}}2^{4n_{k}+2}%
}{Q(2^{4m_{k}})}}\frac{2^{4n_{k}}\left\vert M_{k}\left( x,y\right)
\right\vert }{Q(2^{4n_{k}})}dxdy\right]  \\
&\leq &\frac{1}{2}\left[ 1+\iint\limits_{T^{2}}\frac{Q\left(
2^{4n_{k}}\right) }{2^{4n_{k}}}\frac{2^{4n_{k}}\left\vert M_{k}\left(
x,y\right) \right\vert }{Q(2^{4n_{k}})}dxdy\right]  \\
&\leq &1.
\end{eqnarray*}

Hence, $\xi _{k}\in S_{Q}\left( 0,1\right) $, and Theorem \ref%
{measuredivergence} is proved.
\end{proof}

The validity of Corollary \ref{measuredivergence2} follows immediately from
Theorem \ref{measuredivergence} and Lemma \ref{GGT2}.


\begin{thebibliography}{99}
\bibitem{G} A. Garsia, \textit{Topic in almost everywhere convergence,}
Chicago, 1970.

\bibitem{gat} G.~Gát, Investigation of certain operators with respect to the
Vilenkin system, Acta Math. Hungar. \textbf{61} (1993), no.~1-2, 131--149.

\bibitem{GGT} G. G\'at, U. Goginava, G. Tkebuchava, \textit{Convergence in
measure of Logarithmic means of double Walsh-Fourier series}, Georgian Math.
J. 12 (2005), no. 4, 607--618. J.

\bibitem{GGT2} G\'at, G.; Goginava, U.; Tkebuchava, G. Convergence in
measure of logarithmic means of quadratical partial sums of double
Walsh-Fourier series. J. Math. Anal. Appl. 323 (2006), no. 1, 535--549.

\bibitem{GGNSMH} G. Gát, U. Goginava, K. Nagy, On the Marcinkiewicz-Fejér
means of double Fourier series with respect to the Walsh-Kaczmarz system.
Studia Sci. Math. Hungar. 46 (2009), no. 3, 399--421.

\bibitem{gg} G.~Gát and U.~Goginava, uniform and $L$-convergence of
logarithmic means of Walsh-Fourier series, Acta Math. Sin. (Engl. Ser.) 22
(2006), no. 2, 497--506.

\bibitem{GogJAT} U. Goginava, The weak type inequality for the maximal
operator of the Marcinkiewicz-Fejér means of the two-dimensional
Walsh-Fourier series. J. Approx. Theory 154 (2008), no. 2, 161--180.

\bibitem{Ha} G. H. Hardy, Divergent series, Oxford, at the Clarendon Press,
1949.

\bibitem{Ge} R. Getsadze, \textit{On the divergence in measure of multiple
Fourier seties,} Some problems of functions theory, \textbf{4} (1988),
84-117(in Russian).

\bibitem{K-R} M. A. Krasnosel'skii and Ya. B. Rutickii, \textit{Convex
functions and Orlicz space}(English translation), P. Noorhoff (Groningen,
1961).

\bibitem{Kon} S. V. Konyagin, Divergence with respect to measure of multiple
Fourier series. (Russian) Mat. Zametki 44 (1988), no. 2, 196--201, 286;
translation in Math. Notes 44 (1988), no. 1-2, 589--592 (1989)

\bibitem{Sim} P.~Simon, Strong convergence of certain means with respect to
the Walsh-Fourier series, Acta Math. Hungar. \textbf{49} (1987), 425--431.

\bibitem{sza} O.~Sz\'asz, On the logarithmic means of rearranged partial
sums of  Fourier series, Bull. Amer. Math. Soc. \textbf{48} (1942), 705--711.

\bibitem{ya} K.~Yabuta, Quasi-Tauberian theorems, applied to the summability
of Fourier series by Riesz's logarithmic means, Tôhôku Math. Journ. \textbf{%
22} (1970), 117--129.

\bibitem{Zh} L. V. Zhizhiashvili, \textit{Some problems of multidimensional
harmonic analysis,} Tbilisi, TGU, 1996 (Russian).
\end{thebibliography}
\end{document}